\documentclass[12pt,a4paper]{article}

\usepackage[top=3cm, bottom=3cm, left = 2.5cm, right = 2.5cm]{geometry} 
\usepackage{amsmath,amssymb}  
\usepackage{amsthm}
\usepackage{graphicx,tikz} 
\usepackage{lipsum}

\usepackage{hyperref}  

\usepackage{xcolor}
\usepackage[all]{xy}
\usepackage{cleveref}
\usepackage{MnSymbol}
\usepackage{float}
\usepackage{proof}
\theoremstyle{definition}       		

\newtheorem{definition}{Definition}[section]
\newtheorem*{definition*}{Definition}
\newtheorem{theorem}{Theorem}[section]
\newtheorem{lemma}{Lemma}[section]

\newtheorem{proposition}{Proposition}[section]
\newtheorem{remark}{Remark}[section]

\newtheorem{example}{Example}[section]

\newcommand{\so}{\rightarrow}

\newcommand{\Lg}{\ensuremath{\mathcal{L} }}

\newcommand{\pair}[1]{\langle #1 \rangle}

\newcommand{\chaves}[1]{\{ #1 \}}

\newcommand{\lfium}{{\it LFI1}}

\newcommand{\luka}{{{\L}ukasiewicz}}

\newcommand{\mls}{\textit{MLMS}}

 \newcommand{\lw}{\ensuremath{L4^w}}
 \newcommand{\ls}{\ensuremath{L4^s}}
 \newcommand{\nw}{\ensuremath{N3^w}}
 \newcommand{\ns}{\ensuremath{N3^s}}
 \newcommand{\bw}{\ensuremath{B3^w}}
 \newcommand{\bs}{\ensuremath{B3^s}}
 \newcommand{\cw}{\ensuremath{C2^w}}
 \newcommand{\cs}{\ensuremath{C2^s}}

\newcommand{\fdetobot}{\ensuremath{FDE^\to_{\bot}}}

\newcommand{\ktto}{\ensuremath{{\it K3}^{\to}}}

\newcommand{\lpto}{\ensuremath{{\it LP}^{\to}_\bot}}

\newcommand{\jt}{\ensuremath{\textit{J3}}}

\newcommand{\lt}{\ensuremath{\textit{\L3}}}

\newcommand{\lj}{\ensuremath{\textit{{\L}J4}}}

\newcommand{\sneg}{\ensuremath{{\sim}}}

\label{xLOGICS}{}

\newcommand{\letkp}{\ensuremath{LET_{K}^+}}

\newcommand{\fde}{\ensuremath{FDE}}
\newcommand{\fdeto}{\ensuremath{FDE^{\to}}}

\newcommand{\bo}{\textsf{b}}
\newcommand{\nei}{\textsf{n}}

\newcommand{\lets}{\textit{LET}s}

\newcommand{\lbb}{\ensuremath{\mathbb{L}}}

\label{xCOLORS} %

\newcommand{\white}{\color{white}}

\label{xSYMBOLS}

\newcommand{\cons}{\ensuremath{{\circ}}}
\newcommand{\con}{\ensuremath{{\circ}}}

\newcommand{\defi}{\stackrel{\text{\tiny def}}{=} }

\label{xGENERAL COMMANDS}%

\newcommand{\mh}{\noindent}

\newcommand{\m}{\vspace{1mm}}
\newcommand{\mm}{\vspace{2mm}}
\newcommand{\mmm}{\vspace{3mm}}

\newcommand{\setl}{\setlength\itemsep{-0.2em}}

\newcommand{\bqu}{\begin{quote}} 
\newcommand{\equ}{\end{quote}}
\newcommand{\enr}{\begin{enumerate}[label={(\arabic*)}, resume]}
\newcommand{\eenr}{\end{enumerate}}

\label{xPORTUGUES}

\usepackage{multicol}
\usepackage[authoryear]{natbib}
\setcitestyle{round}

\usepackage{lineno}
\usepackage{pifont}

\def \SubLat {\mathsf{DsubLat}}
\def \LAT {\mathsf{LAT}}
\def \mls {\textsf{MLMS}}

\begin{document}
 
\title{\bf On Many-logic modal structures and information-based logics\footnote{This article is the preprint version of an article (doi number: 10.1007/s10849-026-09467-x) accepted to publication in \textit{The Journal of Logic Language and Information - Springer}.}} 

\author{M. Martins$^1$,  A. Rodrigues$^2$, M. Coniglio$^3$, 
and A. Freire$^4$ \\ 
[4mm]
$^1$ CIDMA, University of Aveiro,   
{martins@ua.pt} \\
$^2$ Federal University of Minas Gerais, abilio.rodrigues@gmail.com \\ 
$^3$ University of Campinas,
coniglio@unicamp.br\\
$^4$ University of Brasília, 
{alfrfreire@gmail.com}}


\maketitle
\begin{abstract} This paper proposes an approach to information-based logics using \textit{many-logic modal structures} (\mls).
These structures can express accessibility relations between worlds with different underlying logics by anchoring them to a    base lattice, which contains the semantics of each logic as a down-complete sublattice. 
  \mls \,  are suitable for representing  connections between information states (i.e., configurations of databases) and the evolution of information states over time. We will illustrate the application of \mls \, by means of the  six-valued logic of evidence and truth \letkp, related to the lattice $L6$, and some four-, three-, and two-valued logics related to 
  down-complete sublattices of $L6$. These logics are capable of representing paracomplete,  paraconsistent, and classical contexts with six-, four-,  three-, and two-valued scenarios.  
\end{abstract}

\section{Introduction}

In 1977 Belnap published a very influential paper with the suggestive title `How a computer should think' \citep{belnap.1977.how}. His motivation was to provide an account of logical consequence appropriate for a computer dealing with possibly inconsistent and incomplete information. 
What Belnap introduced, however, was not really the formal system, since it already existed in the form of the implication-free fragment of the relevant logic \textit{EQ}, 
which is the well-known logic of first-degree entailment (\textit{FDE}) \citep{and.bel.1963}.  
Nor was it a four-valued semantics for \textit{FDE}, since such a semantics and the corresponding lattice had been investigated by  \cite{dunn.1966}.   
What is new in Belnap's paper is the intuitive interpretation of \textit{FDE} as an \textit{information-based logic}, which is a logic designed  to be used by a computer conceived  as a `question-answering
 system' that answers questions based on deductions that take what is stored in its database
 as premises. 
 The databases envisaged by
 Belnap may contain contradictory information or lack of information about a certain topic and  the computer should be   able to deal with these situations in a sensible way. Thus, 
  its underlying logic   must be both paraconsistent and paracomplete, i.e. neither explosion nor excluded middle holds. 

The computer conceived by Belnap `modifies its current set-up' when it gets new information. 
A `set-up', thus, is a specific configuration of the computer over time, 
and these configurations can be seen as \textit{\it information states} 
represented by worlds in a Kripke-style model.
 In this setting, an information state  is the set of sentences that hold within the corresponding world.  
The notion of possibly incomplete but consistent information states 
appears in  \cite{kripke.1965} and independently in  \cite{grzegorczyk.1964}. 
In Wansing \cite{wans.93}   the notion of possibly inconsistent and incomplete information states is applied to information-based logics.

Let us now consider Kripke models where information states are subjected to different logics. 
In this setting: (i) how can information be `transferred' from one world to another? and 
(ii) how can a modal operator $\Box$ be defined when worlds have different underlying logics?
To answer these questions, we introduce here an approach to information-based logics extending the concept of
many-logic modal structures (\mls),  a framework for Kripke semantics proposed 
by  \cite{RM24}. 
Rather than considering sublattices of a base lattice, as done by Freire and Martins, 
we consider here down-complete sublattices (dc-sublattices for short), which allow us to incorporate a broader class of formal systems related to the base lattice (see~\cite[]{Freire_Madeira_Martins}).

\mls s  are well-suited for representing the connections between information states 
viewed as  databases with different underlying logics,   the evolution of databases over time, and 
databases accessed by users with different privileges.  
This framework integrates worlds with different formal systems related to dc-sublattices of a given base lattice. 
These modal structures can express connections between these formal  systems -- 
and thus between worlds in the corresponding many-logic structure -- by anchoring them to the base  lattice 
 that contains the semantics of each formal system as a dc-sublattice. 
In this context, the  notions of necessity and possibility are redefined in a non-standard way.

In addition to introducing the general formal machinery of \mls s, 
we will explore some of their applications based on  dc-sublattices of the lattice  $L6$,
defined by the six-valued semantics of the logic of evidence and truth \letkp\ \citep{con.rod.2023}. 
Logics of evidence and truth (\textit{LETs}) are equipped with a classicality operator \con\ that 
recovers classical logic for sentences in its scope. Thus, when $\con A$ holds, 
the sentence $A$ is subjected to classical logic, 
otherwise the underlying logic of $A$ is \fde, or an extension of \fde.   
\lets\ can   be interpreted in terms of information, and in this case a 
sentence $\con A$ means that the information conveyed by $A$, positive or negative, is reliable. 
The connective \con, thus,  can be thought of as a kind of \textit{certification}.   
$L6$ extends the well-known logical lattice $L4$,
defined by the four-valued semantics of \textit{FDE} 
by adding a new top and a new bottom,
which correspond to two semantic values intuitively interpreted as reliable positive information 
and reliable negative information \cite[Sect.~4.1]{con.rod.2023}.

Among the dc-sublattices of $L6$, we find lattices defined by the semantics of
the logics \fdetobot, \ktto, \lpto, which are extensions, respectively, of 
\fde, Kleene's \textit{K3}, and the logic of paradox \textit{LP}  
with a material implication and a bottom particle.
Dc-sublattices of $L6$ also provide semantics for {\L}ukasiewicz' logic \textit{\L3},
the logic  \textit{J3}, introduced in  
\cite{dottaviano.1970},  
and also for the logic \lj, introduced here. \lj\ is a four-valued paraconsistent and  paracomplete 
 logic obtained by dropping either excluded middle from
\jt\ or explosion from \lt.   
Together with classical logic, the matrix logics mentioned above define the set \lbb\ that will be used here to exemplify  the many-logic modal structures. 

The remainder of this paper is structured as follows.  
 Section \ref{sec.mlms} presents the general machinery of many-logic modal structures. 
 Section \ref{sec.from4to6} introduces the logic \letkp\ and the lattice $L6$ with the corresponding six 
 scenarios.  
In this Section we  also present the dc-sublattices of $L6$,  the respective matrix logics, with four, three and two values, 
and the intuitive interpretation of them in terms of subsets of the six scenarios of \letkp. 
 In  Section~\ref{sec.twist.and.semantics} we show how the multivalued semantics of the logics mentioned in   Section \ref{sec.from4to6} are obtained by means of twist structures based on non-deterministic 
two-valued semantics. 
  Section~\ref{sec.examples} provides examples of \mls s based on $L6$ and its dc-sublattices, and    
 in Section~\ref{sec.modalities} we  consider how the topics discussed in this paper align with standard validities in modal logic.
  We finish this paper with some remarks on future works in Section~\ref{sec.final}.



\label{aa}

\section{Many-logic modal structures}   \label{sec.mlms}


We start by introducing some notions related to lattices that will be important in the definition of the main concept fo this section --  many-logic modal structures.

\begin{definition} Let  $\mathsf{L}=\langle L,\cdot,{+}\rangle$ be a complete lattice with associated partial order $\leq$. A {\em down-complete sublattice (dc-sublattice)}  of  $\mathsf{L}$ is a complete lattice $\mathsf{L}'=\langle L',\cdot',{+}'\rangle$  with associated partial order $\leq'$ such that:
\begin{enumerate}
	\item $L' \subseteq L$;
	\item ${\leq}'={\leq} \cap (L' \times L')$, i.e.: for every $x,y \in L'$, $x \leq' y$ iff $x \leq y$.
\end{enumerate}

We denote by   $\SubLat(\pair{{L},\cdot,+})$  the set of all \textit{dc-sublattices} of $\pair{{L},\cdot,+}$. 
\end{definition}

\begin{definition}[The down-interpretation]
		 Let  $\mathsf{L}=\langle L,\cdot,{+}\rangle$ be a complete lattice with associated partial order $\leq$, and let  $\mathsf{L}'=\langle L',\cdot',{+}'\rangle$ be a  dc-sublattice  of  $\mathsf{L}$. The {\em down-interpretation} of value $x \in L$ in $\mathsf{L}'$ is defined as follows:
$$x^{\mathsf{L}'} = \bigvee_{\mathsf{L}'} \{y \in L' \ : \ y \leq x\}.$$
\end{definition}

\noindent
Observe that, if $x \in L'$ then $x^{\mathsf{L}'} =x$. In addition, if $\{y \in L'  \ : \   y \leq x \} = \emptyset$, then $x^{\mathsf{L}'} $ is the least value in $L'$. 
Moreover, given a complete lattice $\mathsf{L}=\langle L,\cdot,{+}\rangle$ and  a  dc-sublattice  $\mathsf{L}'=\langle L',\cdot',{+}'\rangle$ of  $\mathsf{L}$, with associated partial orders $\leq$ and $\leq'$, resp., we have that  
  $x \leq y$ implies  $x^{\mathsf{L}'} \leq' y^{\mathsf{L}'}$, for any $x, y \in L$.
It is not difficult to see that this property implies, for every $\emptyset \neq X \subseteq L'$, 
$$\bigvee_{\mathsf{L}'}X = \big(\bigvee_{\mathsf{L}} X\big)^{\mathsf{L}'} \ \ \mbox{ and } \ \ \bigwedge_{\mathsf{L}'}X = \big(\bigwedge_{\mathsf{L}} X\big)^{\mathsf{L}'}.$$


If $X \subseteq L$ then $X^{\mathsf{L}'}$ denotes the set $\{ x^{\mathsf{L}'} \ : \ x \in X\}$.

\begin{proposition}\label{Prop3}  Let  $\mathsf{L}=\langle L,\cdot,{+}\rangle$ be a complete lattice and let  $\mathsf{L}'=\langle L',\cdot',{+}'\rangle$ be a  down-complete sublattice  of  $\mathsf{L}$. Then for every $\emptyset \neq X \subseteq L$,
$$\bigvee_{\mathsf{L}'}X^{\mathsf{L}'} \leq' \big(\bigvee_{\mathsf{L}} X\big)^{\mathsf{L}'} \ \ \mbox{ and } \ \ \bigwedge_{\mathsf{L}'}X^{\mathsf{L}'} \geq' \big(\bigwedge_{\mathsf{L}} X\big)^{\mathsf{L}'}.$$
\end{proposition}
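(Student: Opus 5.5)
The plan is to use only the monotonicity of the down-interpretation stated just before the proposition: $x \leq y$ implies $x^{\mathsf{L}'} \leq' y^{\mathsf{L}'}$. We also use the fact that $\mathsf{L}'$ is a complete lattice whose order $\leq'$ is the restriction of $\leq$. Fix $\emptyset \neq X \subseteq L$ and write $s = \bigvee_{\mathsf{L}} X$ and $i = \bigwedge_{\mathsf{L}} X$, both of which exist because $\mathsf{L}$ is complete.

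For the join inequality, take an arbitrary $x \in X$. Since $x \leq s$ in $\mathsf{L}$, monotonicity gives $x^{\mathsf{L}'} \leq' s^{\mathsf{L}'}$. As $s^{\mathsf{L}'} \in L'$, it is an upper bound in $\mathsf{L}'$ of the set $X^{\mathsf{L}'} = \{x^{\mathsf{L}'} : x \in X\}$. Completeness of $\mathsf{L}'$ guarantees that $\bigvee_{\mathsf{L}'} X^{\mathsf{L}'}$ exists and is the least upper bound in $\leq'$. It is therefore $\leq' s^{\mathsf{L}'}$, which is the first claim.

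The meet inequality is dual. For each $x \in X$ we have $i \leq x$, so $i^{\mathsf{L}'} \leq' x^{\mathsf{L}'}$. Hence $i^{\mathsf{L}'}$ is a lower bound of $X^{\mathsf{L}'}$ in $\mathsf{L}'$, and so $i^{\mathsf{L}'} \leq' \bigwedge_{\mathsf{L}'} X^{\mathsf{L}'}$, which is the second claim.

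The only point needing care is the monotonicity fact itself, which the paper asserts without proof. If I need to justify it, it follows from the definition. If $x \leq y$, then $\{z \in L' : z \leq x\} \subseteq \{z \in L' : z \leq y\}$, so the $\mathsf{L}'$-join of the first set is $\leq'$ the $\mathsf{L}'$-join of the second. When the first set is empty, its join is the least element of $L'$, and the inequality holds trivially. The nonemptiness of $X$ is not really needed, but we keep it to match the statement.
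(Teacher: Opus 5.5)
Your proof is correct and matches the paper's: the join inequality is argued identically, and for the meet inequality you apply monotonicity of the down-interpretation directly to $\bigwedge_{\mathsf{L}} X \leq x$, whereas the paper unfolds that same monotonicity inline by showing that every $x' \in L'$ with $x' \leq \bigwedge_{\mathsf{L}} X$ satisfies $x' \leq' x^{\mathsf{L}'}$. Your side remarks are also correct: monotonicity follows from the definition, and nonemptiness of $X$ is not needed.
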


\begin{proof} 
(1) Let $u=\bigvee_{\mathsf{L}} X$ and $z=u^{\mathsf{L}'}$. If $x \in X$ then $x \leq u$ and so, $x^{\mathsf{L}'} \leq' u^{\mathsf{L}'}=z$. From this $\bigvee_{\mathsf{L}'}X^{\mathsf{L}'} \leq' z$.\\ 
(2)  Let $w=\bigwedge_{\mathsf{L}} X$ and $z'=w^{\mathsf{L}'}$.
Let $x' \in L'$ such that $x' \leq w$. If $x \in X$ then $w \leq x$ and so $x' \leq x$. Clearly, $x'=(x')^{\mathsf{L}'} \leq' x^{\mathsf{L}'}$, and so $z'=\bigvee_{\mathsf{L}'}\{x' \in L' \ : \ x' \leq w\} \leq' x^{\mathsf{L}'}$, for every $x \in X$. From this $z' \leq' \bigwedge_{\mathsf{L}'} X^{\mathsf{L}'}$.
\end{proof}

%
%

\begin{definition} Let $\mathsf{L}=\langle L,\cdot,{+}\rangle$ be a complete lattice. We say that $L$ is a $[\land\bigvee]$-lattice if, for every $X \subseteq L$ and every $x \in L$,
$$x \cdot \bigvee_\mathsf{L} X =  \bigvee_\mathsf{L} \{x \cdot y \ : \ y \in X\}.$$
\end{definition}

\begin{lemma} \label{Lemma1}  Let  $\mathsf{L}=\langle L,\cdot,{+}\rangle$ be a complete lattice and let  $\mathsf{L}'=\langle L',\cdot',{+}'\rangle$ be a  dc-sublattice  of  $\mathsf{L}$. If $x,y \in L'$ then $x \cdot' y \leq x \cdot y$ and  $x + y \leq x +' y$.\end{lemma}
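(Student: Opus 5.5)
The plan is to compare the two lattices only through the shared order: by definition of a dc-sublattice, $\leq'$ is the restriction of $\leq$ to $L'$. Everything then reduces to the universal properties of meets and joins in $\mathsf{L}$ and in $\mathsf{L}'$. No down-interpretation or Proposition~\ref{Prop3} is needed. The point is that $x\cdot' y$ is computed within the smaller set $L'$, so it can only be lower than the meet in $L$; dually, $x+'y$ can only be higher than the join in $L$.

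\textbf{Meet.} Let $x,y\in L'$ and put $m=x\cdot' y\in L'$. Since $\mathsf{L}'$ is a lattice with order $\leq'$, we have $m\leq' x$ and $m\leq' y$. Because $m,x,y\in L'$, clause 2 of the definition of dc-sublattice turns these into $m\leq x$ and $m\leq y$ in $\mathsf{L}$. So $m$ is a lower bound of $\{x,y\}$ in $\mathsf{L}$, and since $x\cdot y$ is the greatest lower bound there, $m\leq x\cdot y$. This is exactly $x\cdot' y\leq x\cdot y$.

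\textbf{Join.} The argument is dual. Put $j=x+'y\in L'$. Then $x\leq' j$ and $y\leq' j$, hence $x\leq j$ and $y\leq j$ in $\mathsf{L}$. So $j$ is an upper bound of $\{x,y\}$ in $\mathsf{L}$, and therefore $x+y\leq j=x+'y$.

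The only step needing care is transferring $\leq'$ to $\leq$. This is legitimate precisely because all elements involved ($x$, $y$, $x\cdot' y$, $x+'y$) lie in $L'$, which is where clause 2 applies. The statement is inequalities, not equalities: $x\cdot y$ need not belong to $L'$, so we cannot transfer facts in the reverse direction. This is the expected asymmetry, and there is no real obstacle.
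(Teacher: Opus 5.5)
Your proposal is correct and takes essentially the paper's approach. The paper notes that the lower bounds of $\{x,y\}$ in $\mathsf{L}'$ form a subset of the lower bounds in $\mathsf{L}$ and compares their maxima. You use the same observation directly: $x\cdot' y$ is a lower bound of $\{x,y\}$ in $\mathsf{L}$, and dually $x+'y$ is an upper bound, which gives both inequalities.
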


\begin{proof}
	
Let $Lb_{\mathsf{L}'}(\{x,y\})=\{x' \in L' \ : \ x' \leq' x \mbox{ and } x' \leq' y\}$ and $Lb_{\mathsf{L}}(\{x,y\})=\{z \in L \ : \ z \leq x  \mbox{ and }  z \leq y\}$. Since $Lb_{\mathsf{L}'}(\{x,y\}) \subseteq Lb_{\mathsf{L}}(\{x,y\})$ then
$$x \cdot' y = Max_{\mathsf{L}'}(Lb_{\mathsf{L}'}(\{x,y\})) = Max_{\mathsf{L}}(Lb_{\mathsf{L}'}(\{x,y\})) \leq  Max_{\mathsf{L}}(Lb_{\mathsf{L}}(\{x,y\})) = x \cdot y.$$
The proof that  $x + y \leq x +' y$ is analogous, but now by using the fact that, if $\emptyset \neq X \subseteq Y \subseteq L$ then $Min_{\mathsf{L}}(X) \geq Min_{\mathsf{L}}(Y)$.
\end{proof}

\begin{proposition}\label{Prop4} Let  $\mathsf{L}=\langle L,\cdot,{+}\rangle$ be a complete lattice and  $\mathsf{L}'=\langle L',\cdot',{+}'\rangle$ be a  dc-sublattice  of  $\mathsf{L}$ that is a  $[\land\bigvee]$-lattice. Then, for every $x,y \in L$: $x^{L'} \cdot' y^{L'} = (x \cdot y)^{L'}$.\end{proposition}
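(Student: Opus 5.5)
We prove the two inequalities separately. Throughout, the $[\land\bigvee]$ property is used inside $\mathsf{L}'$, i.e.\ with $\cdot'$ and $\bigvee_{\mathsf{L}'}$. Lemma~\ref{Lemma1} and the monotonicity of the down-interpretation (noted after its definition) do the rest.

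For $(x\cdot y)^{\mathsf{L}'} \leq' x^{\mathsf{L}'}\cdot' y^{\mathsf{L}'}$: since $x\cdot y\leq x$ and $x\cdot y\leq y$, monotonicity of $(\,\cdot\,)^{\mathsf{L}'}$ gives $(x\cdot y)^{\mathsf{L}'}\leq' x^{\mathsf{L}'}$ and $(x\cdot y)^{\mathsf{L}'}\leq' y^{\mathsf{L}'}$. Hence $(x\cdot y)^{\mathsf{L}'}$ lies below the meet $x^{\mathsf{L}'}\cdot' y^{\mathsf{L}'}$ in $\mathsf{L}'$.

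For the converse, put $A=\{a\in L' : a\leq x\}$ and $B=\{b\in L' : b\leq y\}$, so that $x^{\mathsf{L}'}=\bigvee_{\mathsf{L}'}A$ and $y^{\mathsf{L}'}=\bigvee_{\mathsf{L}'}B$. Apply the $[\land\bigvee]$ property of $\mathsf{L}'$ twice, using commutativity of $\cdot'$ in between. This gives
\[
x^{\mathsf{L}'}\cdot' y^{\mathsf{L}'}=\bigvee_{\mathsf{L}'}\{a\cdot' y^{\mathsf{L}'} : a\in A\}=\bigvee_{\mathsf{L}'}\{a\cdot' b : a\in A,\ b\in B\}.
\]
This step also covers empty $A$ or $B$. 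The property holds for every $X\subseteq L'$, and $\bigvee_{\mathsf{L}'}\emptyset$ is the least element of $L'$, which matches the convention for $x^{\mathsf{L}'}$ when there are no lower bounds in $L'$.

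Now fix $a\in A$ and $b\in B$. By Lemma~\ref{Lemma1}, $a\cdot' b\leq a\cdot b\leq x\cdot y$, and $a\cdot' b\in L'$. So $a\cdot' b$ belongs to the set $\{z\in L' : z\leq x\cdot y\}$ whose $\mathsf{L}'$-join is $(x\cdot y)^{\mathsf{L}'}$. Therefore $a\cdot' b\leq'(x\cdot y)^{\mathsf{L}'}$ for all such $a,b$, and taking the join yields $x^{\mathsf{L}'}\cdot' y^{\mathsf{L}'}\leq'(x\cdot y)^{\mathsf{L}'}$.

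The only delicate point is the double distribution step. Both joins must be taken in $\mathsf{L}'$ rather than in $\mathsf{L}$, since $\mathsf{L}'$ need not be closed under the joins of $\mathsf{L}$. This is exactly why the hypothesis requires $\mathsf{L}'$ itself to be a $[\land\bigvee]$-lattice.
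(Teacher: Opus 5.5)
Your proof is correct and follows essentially the same route as the paper's. You distribute $\cdot'$ over both $\mathsf{L}'$-joins to write $x^{\mathsf{L}'}\cdot' y^{\mathsf{L}'}$ as $\bigvee_{\mathsf{L}'}\{a\cdot' b : a\in A,\ b\in B\}$, bound each $a\cdot' b$ by $x\cdot y$ via Lemma~\ref{Lemma1}, and compare with the join defining $(x\cdot y)^{\mathsf{L}'}$. The only difference is that you derive the easy inequality $(x\cdot y)^{\mathsf{L}'}\leq' x^{\mathsf{L}'}\cdot' y^{\mathsf{L}'}$ directly from monotonicity of the down-interpretation, whereas the paper cites Proposition~\ref{Prop3} for it; you are also more explicit than the paper about the double distribution and the empty-set case.
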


\begin{proof}
Let  $x,y \in L$. By Proposition~\ref{Prop3} , it is enough to prove that $x^{L'} \cdot' y^{L'} \leq (x \cdot y)^{L'}$. Using the fact that $\mathsf{L}'$ is a $[\land\bigvee]$-lattice, it follows that 
$$x^{L'} \cdot' y^{L'} = \bigvee_{\mathsf{L}'}X_{x,y}$$
where $X_{x,y}=\{x' \cdot' y' \ : \ x',y' \in L', \ x' \leq x, \mbox{ and } y' \leq y\}$. Now, let $x' \cdot' y' \in X_{x,y}$. Since $x' \leq x$ and $y' \leq y$ then, by Lemma~\ref{Lemma1}, $x' \cdot' y' \leq x' \cdot y' \leq x \cdot y$. This shows that $X_{x,y} \subseteq Y_{x,y}=\{z' \in L' \ : \ z' \leq x \cdot y\}$. From this, $x^{L'} \cdot' y^{L'} = \bigvee_{\mathsf{L}'}X_{x,y} \leq' \bigvee_{\mathsf{L}'}Y_{x,y} = (x \cdot y)^{L'}$ as required.
\end{proof}

\

%
%

\begin{definition} (Filtered $\Lg$-lattices)

	\m\mh Let a \textit{filtered $\Lg$-lattice} be a matrix logic  $ \pair{{\bf L},{\rm D}} $ 
	such that    $\textbf{L}$ is an algebra over $ \Sigma $ with domain $L$ and  $ {\rm D} \subseteq  L $, 
	where the $ \Sigma_L $-reduct  $  \pair{L,\cdot, +} $ is a lattice.
\end{definition}

Consider a \textit{language $\Lg$},  defined over  a signature  $ \Sigma $ containing $ \Sigma_L =  \pair{\land, \lor}$, 
and a set of sentential letters~$ Var $.

\begin{definition} [Many-logic modal  structures, \mls]\label{def.mlms}

\m\mh  Let  $\LAT \subseteq \SubLat(\pair{{L},\cdot ,+})$, such that $ \pair{L,\cdot, +} \in \LAT$,   and $\Lg^\Box$ the language obtained by extending $\Lg$ with 
 the unary connective $\Box$.  
   A \textit{many-logic modal structure} $M$ over $\LAT$ and $\Lg^\Box$  is a tuple $\pair{W, R, I,v}$ such that 
$W \neq \emptyset$ and:

	\begin{enumerate} \setl
    \item $\mathbb{L}$ is a set of matrix logics $ \pair{{\bf L}',{\rm D}'} $ over $\Sigma$ such that $ \Sigma_L $-reduct $\pair{{L}',\cdot,+}$ of ${\bf L}'$ belongs to $\LAT $.\footnote{In \cite{RM24} it is only allowed matrix logics over lattices in $\LAT $ induced by the filter $D$ of ${\bf L}$. Namely, for each  ${\bf L}'$ belongs to $\LAT $ we have the matrix logic  $ \pair{{\bf L}', D\cap L'}$. Here we have a more general notion of many-logic modal structure, than can be worth for applications.  Moreover, in the original work we just consider sublattices instead of dc-sublattices. In this context dc-sublattices were already considered in \cite{Freire_Madeira_Martins}}
\item $I:W\to \mathbb{L}$ assigns a matrix logic $ \pair{{\bf L}_w,{\rm D}_w} $ to each world $w \in W$. By simplicity, and when there is no risk of confusion, we may write $I(w)=L_w$, where $L_w$ is the domain of  ${\bf L}_w$.\footnote{Observe that it is possible to have an \mls\, and some $w\neq w'$ such that 
$I(w) \neq I(w')$ but $L_w=L_{w'}$, that is, two different logics with the same domain. In this case,  the notation $I(w)=L_w$ and $I(w')=L_{w'}$ will not be allowed. In the structures investigated here, however, each lattice is the domain of exactly one logic.} 
\item $R$ is a relation from worlds to worlds, i.e. $R \subseteq W \times W$.
\item   $v$ is a \textit{valuation function} 
 from the set $Var$ of sentential letters to values in $I(w)$, i.e. for each world $w \in W$ and $p\in Var$,  
  $v: W \times Var \longrightarrow L_w$ such that $v(w,p)\in L_w$.

    \end{enumerate}

	\end{definition}	


%
%
%
%


In each world $w$ a valuation $v_w$ 
is extended to all formulas of $\Lg$ based on the  matrix logic $\pair{\textbf{L}_w, {\rm D}_w}$ related to the lattice 
 assigned to $w$, given by $I(w)$. 
  To extend a valuation $v_w$  to all formulas of 
$\Lg^\Box$ (i.e.~to assign semantic values to formulas $\Box A$) we interpret in a dc-sublattice 
$L' \subseteq L$ elements not necessarily in $L'$ according to the down interpretation.
 
 Let $w$ and $w'$ be worlds,  $L_w$ and $L_{w'}$ the corresponding lattices,    
 and $v_{w}(A)$ and $v_{w'}(A)$ the values assigned to a formula $A$ in $w$ and $w'$ by a valuation 
 $v$. We denote by  $ (v_{w}(A))^{L_{w'}}$ and $ (v_{w'}(A))^{L_{w}}$ the values $v_{w}(A)$ and $v_{w'}(A)$  `seen' from the perspective of the worlds $w'$ and $w$ respectively, according to the down-interpretation.

The semantic value of a  formula $\Box A$ in a world $w$ 
is the infimum of the set of  values of $A$ in all the worlds accessed from $w$ as seen 
from the perspective of the local lattice $L_w$.  
A  clause for the connective $\Box$ is thus  defined as follows:  

\begin{definition} [Semantic clause for $\Box$] \label{def.clause.box} \quad \\
\begin{center}
($\ast$) $v_{{w}}(\Box  A)=\bigwedge\limits_{L_{{w}}}\{(v_{w'}(A))^{L_{{w}}}  \ : \  w' \in W \land {w} R w'  \}$. 
\label{clause.box}
\end{center}
\end{definition}

\mh We say that $A$ holds in $w$ in a structure $M$, denoted by $M, w \vDash A$, if $v_w(A) \in \rm D$. 
However, in the framework adopted here, saying that a formula has a specific value  ($v_w(A)=x$)  in a model $M$ and a world $w$   is  generally  
more informative than  saying that it holds  (or does not hold). 
This is due to the semantics of the multivalued logics we will explore below, which may involve multiple designated (or non-designated) values.

\begin{example}

Consider the language $\Lg$ defined over the signature $\Sigma = \{  \neg, \lor, \land \}  $, which is 
the language of \fde, and the lattice $\lw$ below\footnote{Please do not confuse \lw\ with the notation $L_w$ introduced above.}, 
the well-known logical order defined by the four-valued  semantics 
of \fde:  
\begin{center}
$\xymatrix{
&{T_0} \ar@{-}[dl]\ar@{-}[dr] &&\\
\nei \ar@{-}[dr]   & & \bo \ar@{-}[dl]  \\
& F_0 &
}$
\end{center}

\mh 
The matrix logic $\pair{L4^w, \{ T_0, \bo\}}$ over \Lg\  is the logic \fde.  
Consider the lattices  \(\bw = \{T_0, \bo, F_0\}\) and \(\nw = \{T_0, \nei, F_0\}\) 
and the matrix logics $\pair{\bw , \{ T_0, \bo\}}$ and $\pair{\nw , \{ T_0 \}}$, 
 which are known to define the logic of paradox \textit{LP} and Kleene's \textit{K3}, 
 respectively -- the reason we use  $ T_0 $ and $F_0$ instead of $ T$ and $F$ and write $\lw$  
 instead of $L4$ will become clear soon. The language $\Lg^\Box$ extends \Lg\ with $\Box$.   
 Now let $M$ be a \mls\ such that $R$ is an equivalence relation and:

\mm  \quad     \(\LAT =\{L4^w, B3^w,N3^w\}\); 

\quad        $W=\{w_1,w_2,w_3\}$;
       
      \quad   $L_{w_1}= L4^w, \ L_{w_2}= B3^w, \ L_{w_3}= \nw$;  

      \quad $v_{w_1}(p)=\bo$, 
       $v_{w_2}(p)=T_0$, 
      $v_{w_3}(p)=T_0$. 

 \begin{figure}[H]
     \centering
    \tikzset{every picture/.style={line width=0.75pt}} 

\begin{tikzpicture}[x=0.75pt,y=0.75pt,yscale=-0.6,xscale=0.6]

\draw   (203,86) .. controls (203,72.19) and (214.19,61) .. (228,61) .. controls (241.81,61) and (253,72.19) .. (253,86) .. controls (253,99.81) and (241.81,111) .. (228,111) .. controls (214.19,111) and (203,99.81) .. (203,86) -- cycle ;
\draw   (377,86) .. controls (377,72.19) and (388.19,61) .. (402,61) .. controls (415.81,61) and (427,72.19) .. (427,86) .. controls (427,99.81) and (415.81,111) .. (402,111) .. controls (388.19,111) and (377,99.81) .. (377,86) -- cycle ;
\draw   (289.67,204.33) .. controls (289.67,190.53) and (300.86,179.33) .. (314.67,179.33) .. controls (328.47,179.33) and (339.67,190.53) .. (339.67,204.33) .. controls (339.67,218.14) and (328.47,229.33) .. (314.67,229.33) .. controls (300.86,229.33) and (289.67,218.14) .. (289.67,204.33) -- cycle ;
\draw    (237.93,111.55) -- (296.68,184.01) ;
\draw [shift={(297.94,185.56)}, rotate = 230.96] [color={rgb, 255:red, 0; green, 0; blue, 0 }  ][line width=0.75]    (10.93,-3.29) .. controls (6.95,-1.4) and (3.31,-0.3) .. (0,0) .. controls (3.31,0.3) and (6.95,1.4) .. (10.93,3.29)   ;
\draw [shift={(236.67,110)}, rotate = 50.96] [color={rgb, 255:red, 0; green, 0; blue, 0 }  ][line width=0.75]    (10.93,-3.29) .. controls (6.95,-1.4) and (3.31,-0.3) .. (0,0) .. controls (3.31,0.3) and (6.95,1.4) .. (10.93,3.29)   ;
\draw    (255,86) -- (375,86) ;
\draw [shift={(377,86)}, rotate = 180] [color={rgb, 255:red, 0; green, 0; blue, 0 }  ][line width=0.75]    (10.93,-3.29) .. controls (6.95,-1.4) and (3.31,-0.3) .. (0,0) .. controls (3.31,0.3) and (6.95,1.4) .. (10.93,3.29)   ;
\draw [shift={(253,86)}, rotate = 0] [color={rgb, 255:red, 0; green, 0; blue, 0 }  ][line width=0.75]    (10.93,-3.29) .. controls (6.95,-1.4) and (3.31,-0.3) .. (0,0) .. controls (3.31,0.3) and (6.95,1.4) .. (10.93,3.29)   ;
\draw    (390.64,111.66) -- (335.78,187.28) ;
\draw [shift={(334.6,188.9)}, rotate = 305.96] [color={rgb, 255:red, 0; green, 0; blue, 0 }  ][line width=0.75]    (10.93,-3.29) .. controls (6.95,-1.4) and (3.31,-0.3) .. (0,0) .. controls (3.31,0.3) and (6.95,1.4) .. (10.93,3.29)   ;
\draw [shift={(391.82,110.05)}, rotate = 125.96] [color={rgb, 255:red, 0; green, 0; blue, 0 }  ][line width=0.75]    (10.93,-3.29) .. controls (6.95,-1.4) and (3.31,-0.3) .. (0,0) .. controls (3.31,0.3) and (6.95,1.4) .. (10.93,3.29)   ;
\draw    (310.56,228.75) .. controls (276.92,269.08) and (355.64,276.67) .. (330.17,227.45) ;
\draw [shift={(329.36,225.95)}, rotate = 61.28] [color={rgb, 255:red, 0; green, 0; blue, 0 }  ][line width=0.75]    (10.93,-3.29) .. controls (6.95,-1.4) and (3.31,-0.3) .. (0,0) .. controls (3.31,0.3) and (6.95,1.4) .. (10.93,3.29)   ;
\draw    (233.36,61.45) .. controls (260.3,11.98) and (171.71,26.52) .. (216.37,62.36) ;
\draw [shift={(217.76,63.45)}, rotate = 217.44] [color={rgb, 255:red, 0; green, 0; blue, 0 }  ][line width=0.75]    (10.93,-3.29) .. controls (6.95,-1.4) and (3.31,-0.3) .. (0,0) .. controls (3.31,0.3) and (6.95,1.4) .. (10.93,3.29)   ;
\draw    (415.91,63.48) .. controls (447.59,15.96) and (369.5,24.64) .. (395.48,59.98) ;
\draw [shift={(396.3,61.05)}, rotate = 231.92] [color={rgb, 255:red, 0; green, 0; blue, 0 }  ][line width=0.75]    (10.93,-3.29) .. controls (6.95,-1.4) and (3.31,-0.3) .. (0,0) .. controls (3.31,0.3) and (6.95,1.4) .. (10.93,3.29)   ;

\draw (432.67,209.18) node   [align=left] {\begin{minipage}[lt]{68pt}\setlength\topsep{0pt}
$L4^w$
\end{minipage}};
\draw (404.33,109.18) node [anchor=north west][inner sep=0.75pt]   [align=left] {$N3^w$};
\draw (245,117.18) node   [align=left] {\begin{minipage}[lt]{68pt}\setlength\topsep{0pt}
$B3^w$
\end{minipage}};
\draw (297.33,192.18) node [anchor=north west][inner sep=0.75pt]   [align=left] {$w_1$};
\draw (385,74.85) node [anchor=north west][inner sep=0.75pt]   [align=left] {$w_3$};
\draw (209,73.85) node [anchor=north west][inner sep=0.75pt]   [align=left] {$w_2$};

\end{tikzpicture}
 \end{figure}

\mh The model $M$ can be thought of as representing three databases connected to each other. The underlying logic of $w_1$
 is \fde, $w_2$ is a closed-world database that admits contradictory information  
 and $w_3$ an open-world database that does not admit inconsistencies.

 First, notice how each world `sees' the others. $w_1$ is an $L4^w$-world, so it sees $w_2$ and $w_3$ 
 `exactly as they are', since the lattices
 $B3^w$ and $N3^w$ are contained in $L4^w$. However,   the converse is not true. 
 According to the down interpretation, when a $B3^w$-world accesses an $L4^w$-world, the value 
 $\nei$ becomes $F_0$, and when a $N3^w$-world accesses a $L4^w$-world, the value 
 $\bo$ becomes $F_0$, that is: $(\bo )^{N3}=F_0$ and $(\nei )^{B3}=F_0$. We have thus the following values in the model $M$ above: 


\begin{multicols}{3}
    
$v_{w_1}(p)=\bo $

$(v_{w_2}(p))^{L_{w_1}}=T_0$
 
$(v_{w_3}(p))^{L_{w_1}}=T_0$

$v_{w_2}(p)=T_0$

$(v_{w_1}(p))^{L_{w_2}}=\bo $
 
$(v_{w_3}(p))^{L_{w_2}}=T_0$

$v_{w_3}(p)=T_0$

$(v_{w_1}(p))^{L_{w_3}}=F_0$
 
$(v_{w_2}(p))^{L_{w_3}}=T_0$

\end{multicols}

\mh Second, the value of $\Box p$ in a world $w$ is given by the infimum of the values of $p$ in the worlds accessed from $w$,  
as seen from $w$. Therefore, 

\begin{multicols}{3}

$v_{w_1}(\Box p)=\bo $

$v_{w_2}(\Box p)=\bo $

$v_{w_3}(\Box p)=F_0$
    
\end{multicols}
    
\end{example}

\m According to the intuitive interpretation we propose, 
 $v_w(\Box A)=x$  means 
`$A$ has the semantic value $x$ for an agent accessing the information available from $w$', 
which is the information in the worlds accessible from $w$.

 \section{From four to three and to six scenarios} \label{sec.from4to6}

We start by revisiting the four-valued information-based interpretation of \fde. 
 We will also see how to provide information-based 
 interpretations to paraconsistent and paracomplete three-valued logics,  
 and the rationale of extending the four scenarios of \fde\ to the six scenarios of the logic  \letkp. 

 The  four-valued semantics for \fde, with the  values 
  $T_0, F_0, \bo, $ and $ \nei $, has the following intuitive meaning:

\begin{itemize}\setl  
\item[(i)] $v(A) = T_0$: only positive information $A$  ($A$ holds,  $\neg A$ does not hold); 
 \item[(ii)]  $v(A) = F_0$: only negative information $A$ ($\neg A$ holds,  $A$ does not hold); 
  \item[(iii)]  $v(A) = \bo$: contradictory information about $A$ (both $A$ and $\neg A$ hold);  
  \item[(iv)]  $v(A) = \nei$: no information at all about $A$ (neither $A$ nor $\neg A$ holds).
\end{itemize}

\mh   \citet[p.~38]{belnap.1977.how} explains the values represented here by $T_0$ and  $F_0$ as `told true' and 
  `told false' signs,  in the sense that a computer `has been told' that $A$  is true and $A$  is false respectively --  
    meaning   that the computer has received, respectively, positive information $A$ and negative information $A$.

 \subsection{The logic \letkp\ and the lattice $L6$}

  Logics of evidence and truth (\lets) are a family of paracomplete and paraconsistent logics    
that extend \fde\  with a classicality operator \con\ that recovers classical negation for sentences in its scope 
as follows:

\begin{itemize}\setl 
        \item[-]  For some $ A $ and $ B $, $ A ,\neg A \nvdash B $, while for every $ A $ and $ B $, $\circ A , A ,\neg A \vdash B $. 
        
       \item[-]  For some $ A $, $\nvdash A  \lor \neg A $, while for every $A$,  
       $\con A  \vdash  A  \lor \neg A $.\footnote{For a standard disjunction $\lor$.} \label{prop.lfi.lfu}
       \end{itemize}

 \mh  
 The   intuitive interpretation of  \lets\ can be given in terms of 
 evidence 
or in terms of information. In the latter case, 
the intuitive reading of a formula $\con A$ is that the information conveyed by $A$, positive or negative, is reliable, and  when $\con A$ does not hold, it means that there is no reliable information about $A$. 
  It is  assumed that reliable information is subject to classical logic, 
  while the underlying logic of non-reliable information 
 is \fde\ or an extension of \fde.\footnote{For the origins and motivations of \lets, and the interpretation in terms of 
 evidence and information, see \cite{rod.ant.lu,rod.car.llp}.  }    
  In this setting, the connective \con\ can be thought of as  kind of \textit{certification}, 
 in the sense that a sentence $\con A$ means that the information $A$, positive or negative, 
 has been  marked as reliable, i.e.~{\it certified} by some  user with such privileges.  
 Thus,  when $\con A$ holds, we have either $\con A \land A$ or 
   $\con A \land\neg A$ but not both, which means, respectively, that $A$ has been certified or $\neg A$ has been  
 certified. 

 \lets\  extend the four scenarios of \fde\   in a natural way, 
 by adding two more scenarios to the four scenarios mentioned above, viz.,  reliable positive and reliable negative information.
 
 \begin{itemize}\setl  
\item[] When $ \con A $ does not hold:

\begin{itemize}

\item[(i)] $v(A) = T_0$: only positive information $A$  ($A$ holds,  $\neg A$ does not hold); 
 \item[(ii)]  $v(A) = F_0$: only negative information $A$ ($\neg A$ holds,  $A$ does not hold); 
  \item[(iii)]  $v(A) = \bo$: contradictory information about $A$ (both $A$ and $\neg A$ hold);  
  \item[(iv)]  $v(A) = \nei$: no information at all about $A$ (neither $A$ nor $\neg A$ holds).
\end{itemize}

\item[] When $ \con A $  holds:
\begin{itemize}

\item[(v)] $v(A) = T$: reliable (certified) positive information $A$;   
 \item[(vi)]  $v(A) = F$: reliable (certified) negative information $A$.
\end{itemize}
  
\end{itemize}

\mh The semantic values $T$ and $F$ above intend to express that the respective information 
has been marked as reliable, while the values $T_0$ and $F_0$ indicate that the respective information  
has not been marked as reliable. 
 We may also read $T_0$ and $F_0$ as \textit{just true} and \textit{just false} respectively, and 
 $T$ and $F$ as \textit{certified true} and \textit{certified false} respectively.

The logic \letkp\ is a logic of evidence and truth introduced in \cite{con.rod.2023}.  
  The \con-free fragment of \letkp\ is the logic \fdeto, 
 which extends \fde\ with a material implication.\footnote{As far as we know, \fdeto\ was introduced 
  in \cite{pynko.1999} dubbed $ \mathcal{IDM}4$. It appears in  \cite{hazen.2019}  
  under the name \fdeto, which we adopt here. In \cite{hazen.2019} extensions of \textit{K3} and \textit{LP} with a material 
  implication are investigated.}  
    Like \fde, \fdeto\ admits a lattice-based 
 semantics with the semantic values $ T_0, \nei, \bo, F_0$.  
 \letkp\   admits a lattice-based semantics with the six values $ T, T_0, \bo, \nei , F_0, F $, 
 where $ T, T_0, \bo$ are  designated (see Section \ref{sec.twist.and.semantics}). Below, the lattice  $L6$, defined by the semantics 
 of \letkp\ \cite[Sect.~4.1]{con.rod.2023}:  

$$
\xymatrix{
&T\ar@{-}[d]&\\
& T_0\ar@{-}[dl]\ar@{-}[dr] &\\
\nei \ar@{-}[dr] & & \bo\ar@{-}[dl]\\
& F_0\ar@{-}[d]\\
&F&
}$$


 \subsection{The dc-sublattices of $L6$ and their associated logics}  \label{sec.sublattices}

Now consider the following extensions of the logic \fdeto, each of which admits a   semantics based 
on dc-sublattices of $L6$ and whose designated values are contained in \( \chaves{ T, T_0, \bo} \):

\begin{description}
\item  \fdetobot, obtained by adding a bottom particle to \fdeto. 
\fdetobot\  is equivalent to the logic $\mathcal{BID}4$  \citep{pynko.1999} and  
admits a lattice-based semantics, with the semantic values $T_0, \nei, \bo, F_0  $. 

The   intuitive interpretation is the same as that of  the   scenarios (i) to (iv) of \fde. 

\item \lpto, obtained by adding excluded middle to \fdetobot. 
\lpto\ can also be defined by adding a material implication and a bottom particle 
to the logic of paradox  \textit{LP}. 
\lpto\ is paraconsistent but not paracomplete, and admits a lattice-based semantics with the semantic values $T_0,  \bo, F_0  $. 

The intuitive interpretation is that of a closed-world database that accepts contradictions but lacks certification information, corresponding to scenarios (i), (ii), and (iii).

\item \ktto, obtained by adding a material implication to  \textit{K3}. Note that bottom can be defined in 
\textit{K3} as $A\land\neg A$. 
\ktto\ is paracomplete but not paraconsistent, and  admits a lattice-based semantics with the semantic values $T_0,  \nei, F_0  $. 

The intuitive   interpretation is similar to that of \textit{K3} as the underlying logic of some 
 implementations of SQL where the third value is
  read `information missing' (see e.g. \citet[pp.~190-1]{Celko2005}).
The corresponding scenarios are  (i), (ii), and (iv). 

\item $CL^w$  is classical logic with the semantic values $T_0$ and $F_0$, corresponding to  scenarios (i) and (ii). 

 \end{description}

\mh The lattices related to the logics above   are the following:

\begin{center}
$\xymatrix{
&{T_0} \ar@{-}[dl]\ar@{-}[dr] &&\\
\nei \ar@{-}[dr]   & & \bo \ar@{-}[dl]  \\
& F_0 &
}$
  \hspace{4mm}
  $
\xymatrix{
&T_0\ar@{-}[d]&\\
& \bo \ar@{-}[d]&\\
&F_0&
}$
  \hspace{4mm}
  $
\xymatrix{
&T_0\ar@{-}[d]&\\
& \nei \ar@{-}[d]&\\
&F_0&
}$
  \hspace{4mm}
$
\xymatrix{ \ \\ \ 
&T_0\ar@{-}[d]&\\
&F_0&
}$

\end{center}
  
\mh Let us call these lattices, respectively, $\lw $, $\bw$,   $\nw$, and $\cw$, 
where the superscript $w$ indicates 
that they are a \textit{weak} interpretation of the corresponding scenarios, in the sense that there is no information 
of reliability being  expressed.    

Now, consider the following logics, which are also extensions of \fdeto\ and admit lattice-based semantics 
that are dc-sublattices of $L6$. Such lattices, however,  instead of the values $T_0$ and $F_0$, contain
the strong values  $T$ and  $F$:

\begin{description} \setl

\item \lj,   a four-valued logic that receives its name because it `combines' (in some sense)  
the paraconsistent logic \jt\  and the 
   \luka' paracomplete  logic \lt.\footnote{In fact, the semantics of all the logics considered here, with the exception of \lj, are associated with sublattices of $L6$. We need the concept of a down-complete sublattice in this paper only to accommodate the logic \lj. Nevertheless, this concept extends the range of logics that can be treated by many-logic modal structures.} In \lj, neither explosion nor excluded middle holds.   
\lj\ admits a four-valued semantic with the values $T, F, \bo, \nei $.\footnote{The logic \lj\ is equivalent to the logic \textit{BS4}, 
investigated by \citet[p.~322]{omori.waragai}. It has been obtained here as a result of changing \lt\ and \jt\ in order to be able 
to express four scenarios. We kept here the name \lj\ to make clear that it has been obtained by `combining', in this sense, \lt\ and \jt. }
\lj\ expresses the scenarios 
(iii), (iv), (v), and (vi). The intuitive interpretation is the same as that of \fde, except that in the scenarios 
with only positive or negative information, such information is considered reliable.

 \item \jt, the three-valued paraconsistent logic introduced by \citep{dottaviano.1970}. 
   \jt\ is equivalent to the logic \lfium\ \citep{carn:marcos:deamo:2000} 
  and admits a three-valued semantics with the  values $T, \bo, F$. 
   
  \jt\ expresses the scenarios (iii),  (v), and (vi), and its intuitive interpretation follows   \citep{carn:marcos:deamo:2000}, where  \textit{LFI1} is  investigated as the underlying logic of closed-world databases that admit inconsistent information.

\item \lt,  the three-valued \luka' logic, which admits a semantics with the values $T,  \nei, F$.  
 
  \lt\ expresses the scenarios  (iv), (v), and (vi), intuitively interpreted like   \ktto, except that reliable information is being expressed.

\item  $CL^s$  is classical logic with the semantic values $T$ and $F$, corresponding to  scenarios (v) and (vi).

\end{description}

\mh 
The lattices related to the logics above are the following:

\begin{center}
$\xymatrix{
&{T} \ar@{-}[dl]\ar@{-}[dr] &&\\
\nei \ar@{-}[dr]   & & \bo \ar@{-}[dl]  \\
& F &
}$
  \hspace{4mm}
  $
\xymatrix{
&T\ar@{-}[d]&\\
& \bo \ar@{-}[d]&\\
&F&
}$
  \hspace{4mm}
  $
\xymatrix{
&T\ar@{-}[d]&\\
& \nei \ar@{-}[d]&\\
&F&
}$
  \hspace{4mm}
$
\xymatrix{ \ \\ \ 
&T\ar@{-}[d]&\\
&F&
}$
\end{center}

\mh Let us call these lattices, respectively, $\ls $, $\bs$,   $\ns$, and $\cs$, 
where the superscript $s$ indicates 
that they are a \textit{strong} interpretation of the scenarios represented, in the sense that, except when 
inconsistency or contradiction occur,  the information being expressed is considered reliable.

\mm 
Now make \Lg\  the   language  of   \letkp,     defined over the signature $\Sigma = \{\con, \neg, \land, \lor, \to  \}$. 
 Consider the filtered lattice $\pair{{\bf L},{\rm D}}$  over \Lg\ such that  
  the domain $L$ of   \textbf{L} is the lattice $L6$ 
 and the filter is the set  of designated values of \letkp,   $\{ T, T_0, \bo  \}$. 
 The six-valued semantics of \letkp\ is obtained by means of twist structures
  based on the two-valued  semantics.
All the logics mentioned above, related to dc-sublattices of $ L6 $, 
are {matrix logics} that can be expressed in the language of \letkp. 
The respective four-, three-, and two-valued semantics    
are obtained by means of twist structures just by  adapting the semantics 
of \letkp\ (details will be given in the Section \ref{sec.twist.and.semantics} below). 
 We have thus the following matrix logics:

\begin{description} \setl 
    
 \begin{multicols}{2}  
    \item  \quad $ \letkp = \langle L6, \{ T, T_0, b \}  \rangle $, 

    \item  \quad  $ \fdetobot = \langle \lw, \{ T_0,  b \}  \rangle $

    \item  \quad  $ \lj = \langle \ls, \{ T,   b \}  \rangle $

    \item  \quad   $ \lpto   = \langle \bw, \{ T_0,   b \}  \rangle $

    \item  \quad  $ \jt   = \langle \bs, \{ T,   b \}  \rangle $

    \item 
$ \ktto = \langle \nw, \{ T_0 \}  \rangle $

    \item $ \lt = \langle \ns, \{ T  \}  \rangle $

    \item $ CL^w   = \langle \cw, \{ T_0  \}  \rangle $

    \item  $ CL^s   = \langle \cs, \{ T  \}  \rangle $

    \item  $ \white CL^s   = \langle \cs, \{ T  \}  \rangle $

 \end{multicols}
 
\end{description}
\mh The set $\LAT = \{  L6, L4^w, L4^s, B3^w, B3^s, N3^w, N3^s, C2^w, C2^s \}  $
  is the collection of algebras that underlie the logics above. 


\mm All the logics above can be interpreted in terms of information
 in the sense that they represent different sets of scenarios that are subsets of the six scenarios of \letkp. 
They are thus capable of expressing the deductive behavior of information in databases with specific features, such as closed  and open world databases, 
with or without reliable information, paraconsistent and/or paracomplete, and even classical.

 In the next section, we will   present the many-valued semantics of the logics discussed above, 
 defined by means of twist structures based on non-deterministic  two-valued semantics. 
 In Section~\ref{sec.examples}, we will illustrate, using the set $\LAT $ above and the corresponding matrix logics, how many-logic model structures can be used to represent connections between databases, users with different types of access (expressed by different logics) to a common database, and the evolution of databases over time. The reader may skip Section~\ref{sec.twist.and.semantics} and proceed directly to Section~\ref{sec.examples}, if desired. 
  However, it is important to make some observations concerning the lattices in $\LAT $:


  \begin{remark} \ 
  	
  	\m\mh (i)  Each $L_w\in \LAT $ is a  dc-sublattice  of $L_6$ then, by Proposition \ref{Prop3}, 
  $$v_w(\square A) \geq \big(\bigwedge_{L_6} \{ v_{w'}(A) \ : \ w R w'\} \big)^{L_w}.$$
  (ii) $L_6$ is finite and distributive, then it is a complete and  a $[\land\bigvee]$-lattice, and so are the  dc-sublattices in $\LAT$. So, by Proposition \ref{Prop4},  $x^{L_w} \cdot_{L_w} y^{L_w} = (x \cdot_{L_6} y)^{L_w}$. Therefore, being $L_6$ finite, it holds that
  $$v_w(\square A) = \big(\bigwedge_{L_6} \{ v_{w'}(A) \ : \ w R w'\} \big)^{L_w}.$$

\end{remark}

 

 \section{Semantics} \label{sec.twist.and.semantics}

We start by  providing two-valued non-deterministic semantics for the logics above.   
Then, the respective many-valued semantics 
 will be defined by means of twist structures, based on the two-valued semantics. 
Note that the two-valued semantics are non-deterministic in the sense that the semantic value of a complex formula is not functionally determined by the values of its parts -- for example, when $\rho(p)=1$, $\rho(\neg p)$ may be either $0$ or $1$ (cf. Definition~\ref{def.sem.two.val}). The many-valued semantics, on the other hand, are of course deterministic 
(cf.~Section~\ref{sec.twist}).


  Consider the set  
\begin{center}
 $\mathbb{L} = \{  \letkp, \fdetobot, \lj, \lpto, \jt, \ktto, \lt, CL^s, CL^w\}$. 
    \end{center}
 All the logics in $\mathbb{L}$ will be defined below over the  signature $\Sigma = \{\con, \neg,\to,\land,\lor  \}$, 
 so they have the same language of \letkp.   


 
\subsection{Two-valued semantics}

\begin{definition} [Two-valued semantics for $\ast \in \mathbb{L}$]  \label{def.sem.two.val} 

\mmm\mh Consider the following semantic clauses: 
  
\begin{itemize} 
    \item[(v1)] $\rho(A \land  B )=1$ iff $\rho(A)=1$ and $\rho( B )=1$,
\item[(v2)] $\rho(A \lor  B )=1$ iff $\rho(A)=1$ or $\rho( B )=1$,
\item[(v3)]  $\rho(A \to  B ) = 1$ iff $\rho(A) = 0$ or  $\rho(B ) = 1$,
\item[(v4)] $\rho(\neg (A \land  B ))$ = 1 iff $\rho(\neg A) = 1$ or  $\rho(\neg  B ) = 1$,
\item[(v5)] $\rho(\neg (A \lor  B )) = 1$ iff $\rho(\neg A) = 1$ and  $\rho(\neg  B ) = 1$,
\item[(v6)]  $\rho(\neg(A \to  B )) = 1$ iff $\rho(A) = 1$ and  $\rho(\neg B) = 1$,
\item[(v7)] $\rho(A) = 1$ iff $\rho(\neg\neg A) = 1$, 
\item[(v8)]  $\rho(\con A) = 0$,   
\item[(v9)]  $\rho(\neg\con A) = 1$, 
\item[(v10)]  $\rho(\con A) = 1$ iff $\big( \rho(A) = 1$ iff  $\rho(\neg A) = 0 \big)$,
\item[(v11)]  $\rho(\con A) = 1$ iff $\rho (\neg\con A) = 0$, 
\item[(v12)]   If $\rho(\neg A)=0$, then $\rho(A)=1$, 
\item[(v13)]   If $\rho(\neg A)=1$, then $\rho(A)=0$, 
\item[(v14)] $\rho(\neg A) = 1$ iff $\rho( A) = 1$, 
\item[(v15)]  $\rho(\con A) = 1$,   
\item[(v16)]  If $\rho(\con A) = 1$, then  $ \rho(A) = 1$ iff  $\rho(\neg A) = 0 $, 
\item[(v17)] $\rho(\con\con A)=1$,
\item[(v18)]  $\rho(\con A) = 1  $ iff $ \rho (\con\neg A) = 1$, 

\item[(v19)] $\rho (\con (A \land  B ))$ = 1 iff 

\quad  $\rho(\con A) = \rho(\con B) = \rho(A) = \rho(B) =1$ or  

\quad  $\rho( \con A )=\rho(\neg A) = 1$ or 
 
\quad  $\rho( \con B)=\rho(\neg B) = 1$,

\item[(v20)] $\rho ( \con(A \lor  B ))$ = 1 iff 

\quad $\rho(\con A) = \rho(\con B) = \rho(\neg A) = \rho(\neg B) =1$ or  

\quad  $\rho(\con A )=\rho(A) = 1$ or 

\quad  $\rho( \con B)=\rho( B) = 1$,

\item[(v21)] $\rho (\con (A \to   B ))$ = 1 iff

\quad $\rho(A) = v(\con B) =  \rho(\neg B) =1$ or  
 
 \quad  $\rho( \con A )=\rho(\neg A) = 1$ or 
 
 \quad  $\rho( \con B)=v(B) = 1$.

\item[(v22)] $\rho (\con (A \to   B ))$ = 1 iff

\quad $\rho(A) =0$ or  $\rho(\cons B)=1$.

\end{itemize}

\m

\mh  A  bivaluation for a logic $\ast\in \mathbb{L}$  is a  
function $\rho:  \mathcal{L} \to \{0,1\}$  satisfying the clauses:

\begin{itemize}

     \item[-] v1 to v7 and v16 to v21, for $\ast = \letkp$, 
    \item[-] v1  to v9,  for  $\ast = FDE^\to_\bot$,    

    \item[-] v1 to v7, v10, v11, and v22, for $\ast = \lj$, 

    \item[-] v1  to v9 and v12, for  $\ast = LP^\to_\bot$,
 
    \item[-] v1 to v7, v10 to v12, and v22, for $\ast = \jt$, 
 
    \item[-] v1 to v9 and v13, for $\ast= \ktto$, 
 
    \item[-] v1 to v7,  v10, v11, v13, and v22, for $\ast=\lt$,  
 
    \item[-]  v1, v3, v4, v8, v14, for $\ast = CL^w$,  
 
    \item[-]  v1, v3, v4, v14, v15, for $\ast = CL^s$. 
\end{itemize}

\mh For all the logics $ \ast\in  \mathbb{L}$
 defined above, 
the semantical consequence relation  with respect to  bivaluations,  
$\models_\ast^2$, is defined as follows:  
$\Gamma\models_{\ast}^2 A$ \ if and only if for every bivaluation $\rho$ for  
$ \ast $, if $\rho(B)=1$ for every $B \in \Gamma$, then $\rho(A)=1$.

\end{definition}

\begin{remark} \ 

\m\mh (i)    The difference between \jt\ and \lt\ is that clause (v12), which makes excluded middle valid, holds in 
the former but not in the latter, while (v13), which makes explosion valid, holds in the latter 
but not in the former. 
Neither (v12) nor (v13) holds in \lj. 

\m\mh (ii) Both \jt\ and \lt\ can be defined in a language with 
\con,  $\lor$, and $\neg$. It would be enough to adopt only the clauses (v2) and (v5) instead of (v1) to (v6) 
and  to define $\land$ and $\to$, as is done in \cite{dottaviano.1970}.  

\m\mh (iii) A  bottom particle $\bot$ can be defined in \letkp, \lj, and \jt\  as $A\land\neg A \land\con A$, for any 
 formula $A$. In $FDE^\to_\bot$  and  $LP^\to_\bot$ it is defined as $\con A$.  
 \ktto\  and \lt\ already have  bottom particles, defined as $A\land\neg A$.

 \m\mh (iv) Deductive systems for all the logics  can be easily defined just by adding to the deductive systems 
 of \fde, \textit{K3}, and \textit{LP} appropriate rules for $\to$ 
 and $\bot$.  \lj, under the name \textit{BS4}, is axiomatized in \cite{omori.waragai}.

\end{remark}

\subsection{Twist structures} \label{sec.twist}

  
As we have seen, the logic \letkp\ admits a (non-deterministic) two-valued semantics. 
A twist structure for  \letkp\ is an  algebra whose domain is formed by
triples   $(z_1,z_2,z_3)$, called snapshots, over 
the two-element Boolean algebra with domain {\bf 2}.   
Each snapshot represents a three-dimensional semantic value 
 where     $z_1$, $z_2$, and $z_2$ are the semantic values of formulas $A$, $\neg A$, and $\con A$ in a given   bivaluation $\rho$,  that is,   $(\rho(A),\rho(\neg A),\rho(\con A))$.  
The semantic values $T, T_0, \nei, \bo, F_0, F$ correspond to the triples 
$(\rho(A),\rho(\neg A),\rho(\con A))$ that represent the six scenarios of \letkp.   
 All the logics related to dc-sublattices of $ L6 $ are {matrix logics} that will be  defined by adapting the semantics of \letkp\ to the four, three, and two scenarios being represented by the corresponding logic.

\begin{definition}[Twist structures  semantics for $\ast \in \mathbb{L}$]    \label{def.twist.geral} 

\m\mh Let   ${\bf 2}^3$ be the set of triples (snapshots) $z=(z_1,z_2,z_3)$ over ${\bf 2}$. 
The twist structure    $\mathcal{M}_\ast = \langle \textsf{B}_\ast,  {\rm  D}_\ast, \mathcal{O}_\ast \rangle$  for $\ast \in {\mathbb{L}}$ 
 {(over the Boolean algebra $\mathcal{B}_2$)}  
  is defined as follows:  
   
 \begin{itemize} 
\item[i.]  The set   $\textsf{B}_\ast$    
is the domain of $\mathcal{M}_\ast$, the set of semantic values: 

\item[] \ For $\ast=\letkp  =  \{T, \, T_0, \, \bo, \, \nei, \, F_0, \, F  \}$, 

\item[] \ For $\ast={\fdetobot}  =  \{ T_0, \, \bo, \, \nei, \, F_0  \}$, 

\item[] \ For $\ast={\lj}  =  \{ T, \, \bo, \, \nei, \, F  \}$, 

\item[] \ For $\ast={\ktto}  =  \{ T_0, \,  \nei, \, F_0  \}$, 

\item[] \ For $\ast={\lt}  =  \{ T, \,  \nei, \, F  \}$, 

\item[] \ For $\ast={\lpto}  =  \{ T_0, \,  \bo, \, F_0  \}$, 

\item[] \ For $\ast={\jt}  =  \{ T, \,  \bo, \, F  \}$, 

\item[] \ For $\ast={CL^w}  =  \{ T_0, \,   F_0  \}$, 
\item[] \ For $\ast={CL^s}  =  \{ T, \,   F  \}$,

\m\item[]  where  \ $T=(1,0,1),  T_{0}=(1,0,0),  \bo=(1,1,0), \nei=(0,0,0), F_{0}=(0,1,0), $ and $ F=(0,1,1) $.

\mm\item[ii.] The set ${\rm D}_\ast \neq \emptyset$, $ {\rm D}_\ast  \subseteq \textsf{B}_\ast$, is the set of designated  values: 
 $ {\rm D}_\ast = \{z \in \textsf{B}_\ast \ : \ z_1=1 \}$;   
  the set of non-designated semantic values is 
$ {\rm  ND}_\ast = \{z \in \textsf{B}_\ast  \ : \ z_1\neq 1 \} $.

\mm\item[iii.] $\mathcal{O}_\ast$ is a map that  assigns, to each $n$-ary connective $\#$ of $\Lg$, a function 
$\tilde{\#} :\textsf{B}_{\ast}^n \to \textsf{B}_\ast$, 
 defined as follows, for every $z$ and $w$ in $\textsf{B}_\ast$: 

 \begin{itemize}   

\m\item[] For $\ast \in  \lbb$: 
    \item[(1)]    $(z_1,z_2,z_3)\,\tilde{\land}\,(w_1,w_2,w_3) = (z_1\sqcap w_1,z_2\sqcup w_2,(z_1 \sqcap z_3 \sqcap w_1 \sqcap w_3) \sqcup (z_2 \sqcap z_3) \sqcup (w_2 \sqcap w_3))$;

    

	\item[(2)] $(z_1,z_2,z_3)\,\tilde{\lor}\,(w_1,w_2,w_3) = (z_1\sqcup w_1,z_2\sqcap w_2, (z_2 \sqcap z_3 \sqcap w_2 \sqcap w_3) \sqcup (z_1 \sqcap z_3) \sqcup (w_1 \sqcap w_3))$;
    

\item[(3)] $\tilde{\neg}\,(z_1,z_2,z_3) = (z_2,z_1,z_3 )$;

\m\item[] For $\ast \in \{\letkp,  \fdetobot,  \lpto, \ktto, CL^w, CL^s \}$: 
\item[(4)]  
 $(z_1,z_2,z_3)\,\tilde{\to}\,(w_1,w_2,w_3) = (\sneg z_1 \sqcup w_1,z_1\sqcap w_2, (z_1 \sqcap w_2 \sqcap w_3) \sqcup (z_2 \sqcap z_3) \sqcup (w_1 \sqcap w_3))$;


\m\item[] For $\ast \in \{\lj,  \jt,  \lt \}$: 
\item[(5)]  $(z_1,z_2,z_3)\,\tilde{\to}\,(w_1,w_2,w_3) = (\sneg z_1 \sqcup w_1,z_1\sqcap w_2, {\sim} z_1 \sqcup w_3$);





\m\item[] For $\ast \in \{\letkp,  \lj, \jt, \lt, CL^s \}$: 

 \item[(6)] $\tilde{\circ}\,(z_1,z_2,z_3) = (z_3,\sneg z_3 ,1 )$;

\m \item[] For $\ast \in \{\fdetobot, \ktto, \lpto, CL^w  \}$: 

\m \item[(7)] $\tilde{\circ}\,(z_1,z_2,z_3) = (z_3,\sneg z_3 , 0 )$.

\end{itemize}
\end{itemize}

\mh For all the logics $ \ast\in  \mathbb{L}$ defined above, 
 the semantic consequence relation with respect to  $\mathcal{M}_\ast$,
  denoted by $\vDash_{\ast} $, is defined as follows:
$ \Gamma\models_{\ast}  A$  if and only if, for every valuation $v$ over $\mathcal{M}_\ast$, 
if $v(B) \in  \rm  D_\ast$ for every $B \in \Gamma$, then $v(A) \in {\rm D}_\ast$.   
\end{definition}




\begin{definition}[Many-valued semantics for  $ \ast \in \mathbb{L} $] \label{def.valuation}

 \m\mh A \textit{valuation} $v$ over the twist structure $ \mathcal{M}_\ast $ is a function  $v:\mathcal{L} \to \mathsf{B}_\ast $ such that:   
\begin{itemize}  
	\item[(v1)] $  v(A \land B) \ = \ v(A) \ \tilde{\land} \ v(B)$;
	\item[(v2)] $  v(A \lor B) \ = \ v(A) \ \tilde{\lor} \ v(B)$;
	\item[(v3)] $  v(A \to B) \ = \  v(A) \ \tilde{\to} \ v(B)$;
	\item[(v4)] $  v(\neg A) \ = \ \tilde{\neg} v(A)$;
	\item[(v5)] $  v(\con A) \ = \ \tilde{\con} v(A)$.

\end{itemize}

\end{definition}

\begin{remark} \

  \m\mh (i) Given the definition of $\textsf{B}_\ast $,  we can write $v(A) = (v_1, v_2, v_3)$, where
 $v_i \in \{ 0,1 \}$.  
 
 \m\mh (ii) The domains $ \textsf{B}_\ast $ do not contain the triples (0, 0, 1) and (1, 1, 1) precisely because paracomplete and paraconsistent scenarios  do not have reliable information, which is expressed by 1 in the third coordinate.

\m\mh (iii)  We saw that each snapshot  is a triple  $(z_1,z_2,z_3)$ such that  
$z_1 = \rho(A)$, $z_2 = \rho(\neg A) $,
and $z_3 = \rho(\con A) $ in a two-valued semantics $ \rho $. Thus, 
  the semantic values $T, T_0, \bo,\nei, F_0 $, and $F$
can be thought of as \textit{names} of the six scenarios (i) to (vi) expressed by \lets.
 
\m\mh (iv) There is no need of twist 
structures to define the semantics of classical logic. 
 Since the snapshots always  have 1 in the third position in the case of $CL^s$ and 0 in $CL^w$, 
 the operations of the connectives always yields in the third position 1 and 0, in each case. 
 Regarding a formula $\con A$,  it is always assigned $(1,0,1) = T$ in $CL^s$, and  
    $(0,1,0) = F_0$ in $CL^w$.  
 Therefore, in $CL^s$, $\con A$ is a top particle, and in  $CL^w$,  it is a bottom particle.


\end{remark}

\subsection{The  matrices}
 
 The logics discussed above can be defined using matrices.

\begin{definition}   \  \label{def.matrices}
 
\m\mh For $  \ast \in \mathbb{L} $,   
the semantics defined by $ \mathcal{M}_\ast $ can be expressed by means of the following matrices: 


\begin{itemize}

 \item[1.] Four-valued semantics for \fdetobot\ ($D=\{T_0,\bo\}$)

\begin{center} \footnotesize
\begin{tabular}{|c||c|c|c|c|}
\hline
 $\tilde{\wedge}$ & $T_0$  & $\bo$   & $\nei$ & $F_0$\\[1mm]
 \hline \hline
     $T_0$    & $T_0$  & $\bo $ & $\nei$ & $F_0$  \\[1mm] \hline
     $\bo$    & $\bo$  & $\bo$ & $F_0$ & $F_0$    \\[1mm] \hline
     $\nei$    & $\nei$  & $F_0$ & $\nei$ & $F_0$    \\[1mm] \hline
     $F_0$    & $F_0$  & $F_0$ & $F_0$ & $F_0$  \\[1mm] \hline
\end{tabular}
\hspace{1cm}
\begin{tabular}{|c||c|c|c|c|}
\hline
 $\tilde{\lor}$ & $T_0$  & $\bo$   & $\nei$ & $F_0$\\[1mm]
 \hline \hline
     $T_0$    & $T_0$  & $T_0$ & $T_0$ & $T_0$  \\[1mm] \hline
     $\bo$    & $T_0$ & $\bo$ & $T_0$ & $\bo$    \\[1mm] \hline
     $\nei$    & $T_0$ & $T_0$ & $\nei$ & $\nei$    \\[1mm] \hline
     $F_0$    & $T_0$  & $\bo$ & $\nei$ & $F_0$  \\[1mm] \hline
\end{tabular}

\

\

\begin{tabular}{|c||c|c|c|c|}
\hline
 $\tilde{\to}$ &   $T_0$  & $\bo$ & $\nei$ & $F_0$    \\[1mm]
 \hline \hline
     $T_0$   & $T_0$ & $\bo$ & $\nei$ & $F_0$   \\[1mm] \hline
     $\bo$    & $T_0$ & $\bo$ & $\nei$ & $F_0$   \\[1mm] \hline
     $\nei$    & $T_0$ & $T_0$ & $T_0$ & $T_0$   \\[1mm] \hline
     $F_0$     & $T_0$ & $T_0$ & $T_0$ & $T_0$   \\[1mm] \hline
\end{tabular}
\hspace{1.5cm}
\begin{tabular}{|c||c|} \hline
$\quad$ & $\tilde{\neg}$ \\[1mm]
 \hline \hline
     $T_0$   & $F_0$    \\[1mm] \hline
     $\bo$   &$\bo$    \\[1mm] \hline
     $\nei$   & $\nei$    \\[1mm] \hline
     $F_0$   & $T_0$    \\[1mm] \hline
\end{tabular}
\hspace{1.5cm}
\begin{tabular}{|c||c|}
\hline
 $\quad$ & $\tilde{\circ}$ \\[1mm]
 \hline \hline
     $T_0$   & $F_0$     \\[1mm] \hline
     $\bo$   & $F_0$     \\[1mm] \hline
     $\nei$   & $F_0$     \\[1mm] \hline
     $F_0$   & $F_0$     \\[1mm] \hline
\end{tabular}
\end{center}

\item[2.] 
Three-valued semantics for \ktto\ ($D=\{T_0\}$)

\begin{center} \footnotesize
\begin{tabular}{|c||c|c|c|}
\hline
 $\tilde{\land}$ & $T_0$     & $\nei$ & $F_0$\\[1mm]
 \hline \hline
     $T_0$    & $T_0$   & $\nei$ & $F_0$  \\[1mm] \hline
     $\nei$    & $\nei$  & $\nei$ & $F_0$    \\[1mm] \hline
     $F_0$    & $F_0$  &  $F_0$ & $F_0$  \\[1mm] \hline
\end{tabular}
\hspace{1cm}
\begin{tabular}{|c||c|c|c|}
\hline
 $\tilde{\lor}$ & $T_0$     & $\nei$ & $F_0$\\[1mm]
 \hline \hline
     $T_0$    & $T_0$   & $T_0$ & $T_0$  \\[1mm] \hline
     $\nei$    & $T_0$  & $\nei$ & $\nei$    \\[1mm] \hline
     $F_0$    & $T_0$  &  $\nei$ & $F_0$  \\[1mm] \hline
\end{tabular}

\

\

\begin{tabular}{|c||c|c|c|}
\hline
 $\tilde{\to}$ &   $T_0$  &  $\nei$ & $F_0$    \\[1mm]
 \hline \hline
     $T_0$   & $T_0$  & $\nei$ & $F_0$   \\[1mm] \hline
     $\nei$    & $T_0$ & $T_0$ & $T_0$   \\[1mm] \hline
     $F_0$     & $T_0$  & $T_0$ & $T_0$   \\[1mm] \hline
\end{tabular}
\hspace{1.5cm}
\begin{tabular}{|c||c|} \hline
$\quad$ & $\tilde{\neg}$ \\[1mm]
 \hline \hline
     $T_0$   & $F_0$    \\[1mm] \hline
     $\nei$   & $\nei$    \\[1mm] \hline
     $F_0$   & $T_0$    \\[1mm] \hline
\end{tabular}
\hspace{1.5cm}
\begin{tabular}{|c||c|}
\hline
 $\quad$ & $\tilde{\circ}$ \\[1mm]
 \hline \hline
     $T_0$   & $F_0$     \\[1mm] \hline
     $\nei$   & $F_0$     \\[1mm] \hline
     $F_0$   & $F_0$     \\[1mm] \hline
\end{tabular}
\end{center}

\mh The tables above (except the one for \con) can be found in  \cite{hazen.2019}, where the logic \ktto\ is investigated. 
 Note that $\con$ can be defined in the original language of \ktto\ as follows:    $\con A\defi A\land\neg A$.

\item[3.] Three-valued semantics for \lpto\ ($D=\{T_0,\bo\}$)

 \begin{center} \footnotesize 
\begin{tabular}{|c||c|c|c|}
\hline
 $\tilde{\land}$ & $T_0$     & $\bo$ & $F_0$\\[1mm]
 \hline \hline
     $T_0$    & $T_0$   & $\bo$ & $F_0$  \\[1mm] \hline
     $\bo$    & $\bo$  & $\bo$ & $F_0$    \\[1mm] \hline
     $F_0$    & $F_0$  &  $F_0$ & $F_0$  \\[1mm] \hline
\end{tabular}
\hspace{1cm}
\begin{tabular}{|c||c|c|c|}
\hline
 $\tilde{\lor}$ & $T_0$     & $\bo$ & $F_0$\\[1mm]
 \hline \hline
     $T_0$    & $T_0$   & $T_0$ & $T_0$  \\[1mm] \hline
     $\bo$    & $T_0$  & $\bo$ & $\bo$    \\[1mm] \hline
     $F_0$    & $T_0$  &  $\bo$ & $F_0$  \\[1mm] \hline
\end{tabular}

\

\

\begin{tabular}{|c||c|c|c|}
\hline
 $\tilde{\to}$ &   $T_0$  &  $\bo$ & $F_0$    \\[1mm]
 \hline \hline
     $T_0$   & $T_0$  & $\bo$ & $F_0$   \\[1mm] \hline
     $\bo$    & $T_0$ & $\bo$ & $F_0$   \\[1mm] \hline
     $F_0$     & $T_0$  & $T_0$ & $T_0$   \\[1mm] \hline
\end{tabular}
\hspace{1.5cm}
\begin{tabular}{|c||c|} \hline
$\quad$ & $\tilde{\neg}$ \\[1mm]
 \hline \hline
     $T_0$   & $F_0$    \\[1mm] \hline
     $\bo$   & $\bo$    \\[1mm] \hline
     $F_0$   & $T_0$    \\[1mm] \hline
\end{tabular}
\hspace{1.5cm}
\begin{tabular}{|c||c|}
\hline
 $\quad$ & $\tilde{\circ}$ \\[1mm]
 \hline \hline
     $T_0$   & $F_0$     \\[1mm] \hline
     $\bo$   & $F_0$     \\[1mm] \hline
     $F_0$   & $F_0$     \\[1mm] \hline
\end{tabular}
\end{center}

\mh The logic $LP^\to$, defined by the tables above for $\neg,\lor,\land$, and  $\to$, is 
investigated in \citep{hazen.2019}.  
 The logic \lpto\ extends $LP^\to$ with a bottom particle, defined here as $\con A$ for any $A$. 
 We are not aware of a previous investigation of the logic \lpto.

\item[4.] Four-valued semantics for \lj\  ($D=\{T,\bo\}$)

 \begin{center}
 \footnotesize
\begin{tabular}{|c||c|c|c|c|}
\hline
 $\tilde{\wedge}$ & $T$  & $\bo$   & $\nei$ & $F$\\[1mm]
 \hline \hline
     $T$    & $T$  & $\bo $ & $\nei$ & $F$  \\[1mm] \hline
     $\bo$    & $\bo$  & $\bo$ & $F$ & $F$    \\[1mm] \hline
     $\nei$    & $\nei$  & $F$ & $\nei$ & $F$    \\[1mm] \hline
     $F$    & $F$  & $F$ & $F$ & $F$  \\[1mm] \hline
\end{tabular}
\hspace{1cm}
\begin{tabular}{|c||c|c|c|c|}
\hline
 $\tilde{\lor}$ & $T$  & $\bo$   & $\nei$ & $F$\\[1mm]
 \hline \hline
     $T$    & $T$  & $T$ & $T$ & $T$  \\[1mm] \hline
     $\bo$    & $T$ & $\bo$ & $T$ & $\bo$    \\[1mm] \hline
     $\nei$    & $T$ & $T$ & $\nei$ & $\nei$    \\[1mm] \hline
     $F$    & $T$  & $\bo$ & $\nei$ & $F$  \\[1mm] \hline
\end{tabular}

\

\

\begin{tabular}{|c||c|c|c|c|}
\hline
 $\tilde{\to}$ &   $T$  & $\bo$ & $\nei$ & $F$    \\[1mm]
 \hline \hline
     $T$   & $T$ & $\bo$ & $\nei$ & $F$   \\[1mm] \hline
     $\bo$    & $T$ & $\bo$ & $\nei$ & $F$   \\[1mm] \hline
     $\nei$    & $T$ & $T$ & $T$ & $T$   \\[1mm] \hline
     $F$     & $T$ & $T$ & $T$ & $T$   \\[1mm] \hline
\end{tabular}
\hspace{1.5cm}
\begin{tabular}{|c||c|} \hline
$\quad$ & $\tilde{\neg}$ \\[1mm]
 \hline \hline
     $T$   & $F$    \\[1mm] \hline
     $\bo$   &$\bo$    \\[1mm] \hline
     $\nei$   & $\nei$    \\[1mm] \hline
     $F$   & $T$    \\[1mm] \hline
\end{tabular}
\hspace{1.5cm}
\begin{tabular}{|c||c|}
\hline
 $\quad$ & $\tilde{\circ}$ \\[1mm]
 \hline \hline
     $T$   & $T$     \\[1mm] \hline
     $\bo$   & $F$     \\[1mm] \hline
     $\nei$   & $F$     \\[1mm] \hline
     $F$   & $T$     \\[1mm] \hline
\end{tabular}
\end{center}

\item[5.] Three-valued semantics for \jt/\textit{LFI1} ($D=\{T,\bo\}$)

  \begin{center} \footnotesize
\begin{tabular}{|c||c|c|c|}
\hline
 $\tilde{\land}$ & $T$     & $\bo$ & $F$\\[1mm]
 \hline \hline
     $T$    & $T$   & $\bo$ & $F$  \\[1mm] \hline
     $\bo$    & $\bo$  & $\bo$ & $F$    \\[1mm] \hline
     $F$    & $F$  &  $F$ & $F$  \\[1mm] \hline
\end{tabular}
\hspace{1cm}
\begin{tabular}{|c||c|c|c|}
\hline
 $\tilde{\lor}$ & $T$     & $\bo$ & $F$\\[1mm]
 \hline \hline
     $T$    & $T$   & $T$ & $T$  \\[1mm] \hline
     $\bo$    & $T$  & $\bo$ & $\bo$    \\[1mm] \hline
     $F$    & $T$  &  $\bo$ & $F$  \\[1mm] \hline
\end{tabular}
\hspace{1cm}

\

\

\begin{tabular}{|c||c|c|c|}
\hline
 $\tilde{ \to}$ &   $T$  &  $\bo$ & $F$    \\[1mm]
 \hline \hline
     $T$   & $T$  & $\bo$ & $F$   \\[1mm] \hline
     $\bo$    & $T$ & $\bo$ & $F$   \\[1mm] \hline
     $F$     & $T$  & $T$ & $T$   \\[1mm] \hline
\end{tabular}
\hspace{1.5cm}
\begin{tabular}{|c||c|} \hline
$\quad$ & $\tilde{\neg}$ \\[1mm]
 \hline \hline
     $T$   & $F$    \\[1mm] \hline
     $\bo$   & $\bo$    \\[1mm] \hline
     $F$   & $T$    \\[1mm] \hline
\end{tabular}
\hspace{1.5cm}
\begin{tabular}{|c||c|}
\hline
 $\quad$ & $\tilde{\circ}$ \\[1mm]
 \hline \hline
     $T$   & $T$     \\[1mm] \hline
     $\bo$   & $F$     \\[1mm] \hline
     $F$   & $T$     \\[1mm] \hline
\end{tabular}

\end{center}

\item[6.] Three-valued semantics for \lt\ ($D=\{T\}$)

\begin{center} \footnotesize 
\begin{tabular}{|c||c|c|c|}
\hline
 $\tilde{\land}$ & $T$     & $\nei$ & $F$\\[1mm]
 \hline \hline
     $T$    & $T$   & $\nei$ & $F$  \\[1mm] \hline
     $\nei$    & $\nei$  & $\nei$ & $F$    \\[1mm] \hline
     $F$    & $F$  &  $F$ & $F$  \\[1mm] \hline
\end{tabular}
\hspace{1cm}
\begin{tabular}{|c||c|c|c|}
\hline
 $\tilde{\lor}$ & $T$     & $\nei$ & $F$\\[1mm]
 \hline \hline
     $T$    & $T$   & $T$ & $T$  \\[1mm] \hline
     $\nei$    & $T$  & $\nei$ & $\nei$    \\[1mm] \hline
     $F$    & $T$  &  $\nei$ & $F$  \\[1mm] \hline
\end{tabular}
\hspace{1.5cm}

\

\

\begin{tabular}{|c||c|c|c|}
\hline
 $\tilde{ \to}$ &   $T$  &  $\nei$ & $F$    \\[1mm]
 \hline \hline
     $T$   & $T$  & $\nei$ & $F$   \\[1mm] \hline
     $\nei$    & $T$ & $T$ & $T$   \\[1mm] \hline
     $F$     & $T$  & $T$ & $T$   \\[1mm] \hline
\end{tabular}
\hspace{1.5cm}
\begin{tabular}{|c||c|} \hline
$\quad$ & $\tilde{\neg}$ \\[1mm]
 \hline \hline
     $T$   & $F$    \\[1mm] \hline
     $\nei$   & $\nei$    \\[1mm] \hline
     $F$   & $T$    \\[1mm] \hline
\end{tabular}
\hspace{1.5cm}
\begin{tabular}{|c||c|}
\hline
 $\quad$ & $\tilde{\circ}$ \\[1mm]
 \hline \hline
     $T$   & $T$     \\[1mm] \hline
     $\nei$   & $F$     \\[1mm] \hline
     $F$   & $T$     \\[1mm] \hline
\end{tabular}

\end{center}

\begin{remark} Regarding the presentation of the logics \jt\ and \lt\ above, the following remarks
 are noteworthy.

\m\mh (i) The logic \jt\ was introduced by \cite{dottaviano.1970} in the language with $\neg $, $\lor$, and 
$\nabla$, with the following table for $\nabla A$:

\begin{center}
\begin{tabular}{|c||c|}
\hline
 $\quad$ & $\tilde{\nabla}$ \\[1mm]
 \hline \hline
     $T$   & $T$     \\[1mm] \hline
     $\bo $   & $T$     \\[1mm] \hline
     $F$   & $F$     \\[1mm] \hline
\end{tabular}
\end{center}
In  \cite{RE1990}, Chapter~IX,  it is shown
  that the operator \con\ can be defined  as $\neg(\nabla A \land \nabla\neg A) $.
  This    anticipates the presentation of \textit{J3} as the logic \textit{LFI1} in \cite{carn:marcos:deamo:2000}. 
  In the language with \con\ primitive, 
$\nabla A$ is defined as $A \lor \neg\con A $.

 \m\mh (ii) As remarked by \citet[p.~1350]{dottaviano.1970}, 
the logic \textit{\L3} can be defined in the language with $\neg, \lor$, and  $\nabla$ 
and the respective matrices of \jt, just   replacing \bo\ with \nei\ and taking $T$ as the only designated value. 
 Note that the implication $\to$ of our presentation of \lt\ \textit{is not} the original implication of \lt, call it $\to_L$,
 which is given by the table below: 
 
\begin{center}
    \begin{tabular}{|c||c|c|c|}
\hline
 $\tilde{ \to_L}$ &   $T$  &  $\nei$ & $F$    \\[1mm]
 \hline \hline
     $T$   & $T$  & $\nei$ & $F$   \\[1mm] \hline
     $\nei$    & $T$ & $T$ & $\nei $   \\[1mm] \hline
     $F$     & $T$  & $T$ & $T$   \\[1mm] \hline
\end{tabular}

\end{center} 
 
\mh  In \lt, $\nabla$ and \con\ are interdefinable as in \jt, while   $A\to B$  can be  defined as $\neg\nabla A\lor B$,  and the original implication of \lt\ $A{\to_L} B$   as $(\nabla\neg A \lor B) \land (\nabla B \lor\neg A) $, where 
$A\land B$ is $\neg(\neg A \lor\neg B)$. 
\lt\ can be also defined in a language with $\neg$ and $\to_L$ and the respective tables, being $T$ designated.  

  

%

\end{remark}

\item[7.] Six-valued semantics for \letkp\ ($D=\{T,T_0,\bo\}$)

\begin{center}
\footnotesize 
\begin{tabular}{|c||c|c|c|c|c|c|}
\hline
 $\tilde{\wedge}$ & $T$  & $T_0$   & $\bo$ & $\nei$ & $F_0$  & $F$\\[1mm]
 \hline \hline
    $T$    & $T$  & $T_0$ & $\bo$ & $\nei$ & $F_0$ & $F$   \\[1mm] \hline
     $T_0$    & $T_0$  & $T_0$ & $\bo$ & $\nei$ & $F_0$ & $F$  \\[1mm] \hline
     $\bo$    & $\bo$  & $\bo$ & $\bo$ & $F_0$ & $F_0$ & $F$  \\[1mm] \hline
     $\nei$    & $\nei$  & $\nei$ & $F_0$ & $\nei$ & $F_0$ & $F$  \\[1mm] \hline
     $F_0$    & $F_0$  & $F_0$ & $F_0$ & $F_0$ & $F_0$ & $F$  \\[1mm] \hline
     $F$    & $F$  & $F$ & $F$ & $F$ & $F$ & $F$  \\[1mm] \hline
\end{tabular}
\hspace{0.5cm}
\begin{tabular}{|c||c|c|c|c|c|c|}
\hline
 $\tilde{\vee}$ & $T$  & $T_0$  & $\bo$ & $\nei$ & $F_0$  & $F$ \\[1mm]
 \hline \hline
    $T$    & $T$  & $T$ & $T$ & $T$ & $T$ & $T$   \\[1mm] \hline
     $T_0$    & $T$  & $T_0$ & $T_0$ & $T_0$ & $T_0$ & $T_0$   \\[1mm] \hline
     $\bo$    & $T$  & $T_0$ & $\bo$ & $T_0$ & $\bo$ & $\bo$   \\[1mm] \hline
     $\nei$    & $T$  & $T_0$ & $T_0$ & $\nei$ & $\nei$ & $\nei$   \\[1mm] \hline
     $F_0$    & $T$  & $T_0$ & $\bo$ & $\nei$ & $F_0$ & $F_0$   \\[1mm] \hline
     $F$    & $T$  & $T_0$ & $\bo$ & $\nei$ & $F_0$ & $F$  \\[1mm] \hline
\end{tabular}

\

\

\begin{tabular}{|c||c|c|c|c|c|c|}
\hline
 $\tilde{\to}$ & $T$  & $T_0$  & $\bo$ & $\nei$ & $F_0$  & $F$ \\[1mm]
 \hline \hline
    $T$    & $T$  & $T_0$ & $\bo$ & $\nei$ & $F_0$ & $F$   \\[1mm] \hline
     $T_0$    & $T$  & $T_0$ & $\bo$ & $\nei$ & $F_0$ & $F$  \\[1mm] \hline
     $\bo$    & $T$  & $T_0$ & $\bo$ & $\nei$ & $F_0$ & $F$  \\[1mm] \hline
     $\nei$    & $T$  & $T_0$ & $T_0$ & $T_0$ & $T_0$ & $T_0$  \\[1mm] \hline
     $F_0$    & $T$  & $T_0$ & $T_0$ & $T_0$ & $T_0$ & $T_0$  \\[1mm] \hline
     $F$    & $T$  & $T$ & $T$ & $T$ & $T$ & $T$  \\[1mm] \hline
\end{tabular}
\hspace{1cm}
\begin{tabular}{|c||c|} \hline
$\quad$ & $\tilde{\neg}$ \\[1mm]
 \hline \hline
    $T$   & $F$    \\[1mm] \hline
     $T_0$   & $F_0$    \\[1mm] \hline
     $\bo$   &$\bo$    \\[1mm] \hline
     $\nei$   & $\nei$    \\[1mm] \hline
     $F_0$   & $T_0$    \\[1mm] \hline
     $F$   & $T$    \\[1mm] \hline
\end{tabular}
\hspace{1cm}
\begin{tabular}{|c||c|}
\hline
 $\quad$ & $\tilde{\circ}$ \\[1mm]
 \hline \hline
    $T$   & $T$    \\[1mm] \hline
     $T_0$   & $F$     \\[1mm] \hline
     $\bo$   & $F$     \\[1mm] \hline
     $\nei$   & $F$     \\[1mm] \hline
     $F_0$   & $F$     \\[1mm] \hline
     $F$   & $T$    \\[1mm] \hline
\end{tabular}
\end{center}

\end{itemize}

\end{definition}

\mh 
Note that the matrices of \jt\ and \lt\ are submatrices of the matrices of \lj, 
obtained from the latter by removing rows and columns with the values \nei\ and \bo, respectively.
Analogously, the matrices of \lpto\ and \ktto\ are submatrices of the matrices of \fdetobot. However, neither \lj\ nor 
\fdetobot\ are submatrices of \letkp.

 \section{On some examples}  \label{sec.examples}

 In this section, we will consider many-logic modal structures 
 based on  the lattice $L6$, defined by   the logic \letkp.  
 The  set $\LAT $  contains the dc-sublattices of  $L6$ with four, three, and two values we have seen in Section~\ref{sec.sublattices} above. 
 Recall that here we have just one logic associated with each lattice, and that we may  read $T_0$ and $F_0$ as  {just true} and {just false},   
 $T$ and $F$ as  {certified true} and {certified false}, 
   and that $v_w(\Box A)=x$  is read   as 
`$A$ has the semantic value $x$ for an agent accessing the information available from $w$'.

\begin{example} \ \label{ex.1}

\mm  $I(w_1) = L6, \ I(w_2) = \lw, \ I(w_3) = \bs$ 


\mm 
  $v_{w_1}(p)=T$ 
  
       $v_{w_2}(p)=T_0$   
       
 $v_{w_3}(p)=\bo$    

\mm        $(v_{w_2}(p))^{L_{w_1}}=T_0$   
       
$(v_{w_3}(p))^{L_{w_1}}=\bo  $

      \mm          
 $v_{w_1}(\Box p)= \bigwedge \{ T, T_0, \bo \} = \bo $

 \begin{figure}[H]
     \centering
    \begin{figure}[H]
		\centering
\tikzset{every picture/.style={line width=0.75pt}} 

\begin{tikzpicture}[x=0.75pt,y=0.75pt,yscale=-0.45,xscale=0.45]

\draw   (362,93) .. controls (362,79.19) and (373.19,68) .. (387,68) .. controls (400.81,68) and (412,79.19) .. (412,93) .. controls (412,106.81) and (400.81,118) .. (387,118) .. controls (373.19,118) and (362,106.81) .. (362,93) -- cycle ;
\draw   (196,93) .. controls (196,79.19) and (207.19,68) .. (221,68) .. controls (234.81,68) and (246,79.19) .. (246,93) .. controls (246,106.81) and (234.81,118) .. (221,118) .. controls (207.19,118) and (196,106.81) .. (196,93) -- cycle ;
\draw   (279,243) .. controls (279,229.19) and (290.19,218) .. (304,218) .. controls (317.81,218) and (329,229.19) .. (329,243) .. controls (329,256.81) and (317.81,268) .. (304,268) .. controls (290.19,268) and (279,256.81) .. (279,243) -- cycle ;
\draw    (317.59,222.03) -- (375.62,117.28) ;
\draw [shift={(376.59,115.53)}, rotate = 118.99] [color={rgb, 255:red, 0; green, 0; blue, 0 }  ][line width=0.75]    (10.93,-3.29) .. controls (6.95,-1.4) and (3.31,-0.3) .. (0,0) .. controls (3.31,0.3) and (6.95,1.4) .. (10.93,3.29)   ;
\draw    (290.09,221.03) -- (233.54,116.78) ;
\draw [shift={(232.59,115.03)}, rotate = 61.52] [color={rgb, 255:red, 0; green, 0; blue, 0 }  ][line width=0.75]    (10.93,-3.29) .. controls (6.95,-1.4) and (3.31,-0.3) .. (0,0) .. controls (3.31,0.3) and (6.95,1.4) .. (10.93,3.29)   ;
\draw    (381.59,67.93) .. controls (345.27,3.67) and (463.4,16.31) .. (404.98,73.57) ;
\draw [shift={(404.09,74.43)}, rotate = 316.21] [color={rgb, 255:red, 0; green, 0; blue, 0 }  ][line width=0.75]    (10.93,-3.29) .. controls (6.95,-1.4) and (3.31,-0.3) .. (0,0) .. controls (3.31,0.3) and (6.95,1.4) .. (10.93,3.29)   ;
\draw    (215.59,67.93) .. controls (179.27,3.67) and (297.4,16.31) .. (238.98,73.57) ;
\draw [shift={(238.09,74.43)}, rotate = 316.21] [color={rgb, 255:red, 0; green, 0; blue, 0 }  ][line width=0.75]    (10.93,-3.29) .. controls (6.95,-1.4) and (3.31,-0.3) .. (0,0) .. controls (3.31,0.3) and (6.95,1.4) .. (10.93,3.29)   ;

 \draw    (294.54,265.86) .. controls (260.32,332.45) and (361.92,316.58) .. (319.2,265.82) ;
 \draw [shift={(318.54,265.06)}, rotate = 49.16] [color={rgb, 255:red, 0; green, 0; blue, 0 }  ][line width=0.75]    (10.93,-3.29) .. controls (6.95,-1.4) and (3.31,-0.3) .. (0,0) .. controls (3.31,0.3) and (6.95,1.4) .. (10.93,3.29)   ;

\draw (197,134) node   [align=left] {\begin{minipage}[lt]{17.15pt}\setlength\topsep{0pt}
$\lw$
\end{minipage}};
\draw (355,241.47) node   [align=left] {\begin{minipage}[lt]{17.15pt}\setlength\topsep{0pt}
$L6$
\end{minipage}};
\draw (398,134) node   [align=left] {\begin{minipage}[lt]{17.15pt}\setlength\topsep{0pt}
$\bs$
\end{minipage}};
\draw (314,257) node   [align=left] {\begin{minipage}[lt]{17.15pt}\setlength\topsep{0pt}
$\displaystyle w_{1}$\\
\end{minipage}};
\draw (395,108) node   [align=left] {\begin{minipage}[lt]{17.15pt}\setlength\topsep{0pt}
$\displaystyle w_{3}$\\
\end{minipage}};
\draw (230,108) node   [align=left] {\begin{minipage}[lt]{17.15pt}\setlength\topsep{0pt}
$\displaystyle w_{2}$\\
\end{minipage}};

\end{tikzpicture}
	\end{figure} 
 \end{figure}

\begin{quote}

\mh 	In this example, the worlds $w_2$ and $w_3$ can be thought of as databases with only local access, but $w_1$ can access $w_2$ and $w_3$, as well as itself.  $w_1$ is a $L6$-world, so it can `see everything' in any other database. 
  $w_2$ and $w_3$ are an $\lw$-world and a $\bs$-world, respectively. 

 $w_1$ can be thought of as the main database, while  $w_3$ is database possibly  inconsistent but with all definite information, i.e. information only positive or only negative, considered as certified, and all the  information stored in  $w_2$ is considered as non-certified.

$p$ is assigned $T$ in $w_1$ and $T_0$ in $w_2$, but  since there is contradictory information 
 in $w_3$,  the value of  $\Box p$ for an agent accessing the all the information available in $w_1$ is $\bo$. 
\end{quote}

\end{example}

\begin{example} \  \label{ex.2}
 
\mm $I(w_1) = \cw, \ I(w_2) = L6, \ I(w_3) = \nw$ 


    \mm   $v_{w_1}(p)=T_0$, 
       
          $v_{w_2}(p)=T$,  
          
          $v_{w_3}(p)=\nei$    

 \mm       $(v_{w_2}(p))^{L_{w_1}}=T_0$, 
       
       $(v_{w_3}(p))^{L_{w_1}}=F_0$

     \mm     $v_{w_1}(\Box p)= \bigwedge \{ T_0, T_0, F_0 \} = F_0 $

 \begin{figure}[H]
     \centering
    \begin{figure}[H]
		\centering
\tikzset{every picture/.style={line width=0.75pt}} 

\begin{tikzpicture}[x=0.75pt,y=0.75pt,yscale=-0.45,xscale=0.45]

\draw   (362,93) .. controls (362,79.19) and (373.19,68) .. (387,68) .. controls (400.81,68) and (412,79.19) .. (412,93) .. controls (412,106.81) and (400.81,118) .. (387,118) .. controls (373.19,118) and (362,106.81) .. (362,93) -- cycle ;
\draw   (196,93) .. controls (196,79.19) and (207.19,68) .. (221,68) .. controls (234.81,68) and (246,79.19) .. (246,93) .. controls (246,106.81) and (234.81,118) .. (221,118) .. controls (207.19,118) and (196,106.81) .. (196,93) -- cycle ;
\draw   (279,243) .. controls (279,229.19) and (290.19,218) .. (304,218) .. controls (317.81,218) and (329,229.19) .. (329,243) .. controls (329,256.81) and (317.81,268) .. (304,268) .. controls (290.19,268) and (279,256.81) .. (279,243) -- cycle ;
\draw    (317.59,222.03) -- (375.62,117.28) ;
\draw [shift={(376.59,115.53)}, rotate = 118.99] [color={rgb, 255:red, 0; green, 0; blue, 0 }  ][line width=0.75]    (10.93,-3.29) .. controls (6.95,-1.4) and (3.31,-0.3) .. (0,0) .. controls (3.31,0.3) and (6.95,1.4) .. (10.93,3.29)   ;
\draw    (290.09,221.03) -- (233.54,116.78) ;
\draw [shift={(232.59,115.03)}, rotate = 61.52] [color={rgb, 255:red, 0; green, 0; blue, 0 }  ][line width=0.75]    (10.93,-3.29) .. controls (6.95,-1.4) and (3.31,-0.3) .. (0,0) .. controls (3.31,0.3) and (6.95,1.4) .. (10.93,3.29)   ;
\draw    (381.59,67.93) .. controls (345.27,3.67) and (463.4,16.31) .. (404.98,73.57) ;
\draw [shift={(404.09,74.43)}, rotate = 316.21] [color={rgb, 255:red, 0; green, 0; blue, 0 }  ][line width=0.75]    (10.93,-3.29) .. controls (6.95,-1.4) and (3.31,-0.3) .. (0,0) .. controls (3.31,0.3) and (6.95,1.4) .. (10.93,3.29)   ;
\draw    (215.59,67.93) .. controls (179.27,3.67) and (297.4,16.31) .. (238.98,73.57) ;
\draw [shift={(238.09,74.43)}, rotate = 316.21] [color={rgb, 255:red, 0; green, 0; blue, 0 }  ][line width=0.75]    (10.93,-3.29) .. controls (6.95,-1.4) and (3.31,-0.3) .. (0,0) .. controls (3.31,0.3) and (6.95,1.4) .. (10.93,3.29)   ;

 \draw    (294.54,265.86) .. controls (260.32,332.45) and (361.92,316.58) .. (319.2,265.82) ;
 \draw [shift={(318.54,265.06)}, rotate = 49.16] [color={rgb, 255:red, 0; green, 0; blue, 0 }  ][line width=0.75]    (10.93,-3.29) .. controls (6.95,-1.4) and (3.31,-0.3) .. (0,0) .. controls (3.31,0.3) and (6.95,1.4) .. (10.93,3.29)   ;

\draw (219,131.97) node   [align=left] {\begin{minipage}[lt]{17.15pt}\setlength\topsep{0pt}
$L6$
\end{minipage}};
\draw (355,241.47) node   [align=left] {\begin{minipage}[lt]{17.15pt}\setlength\topsep{0pt}
$\cw$
\end{minipage}};
\draw (398,133.47) node   [align=left] {\begin{minipage}[lt]{17.15pt}\setlength\topsep{0pt}
$\nw$
\end{minipage}};
\draw (314,257) node   [align=left] {\begin{minipage}[lt]{17.15pt}\setlength\topsep{0pt}
$\displaystyle w_{1}$\\
\end{minipage}};
\draw (395,108) node   [align=left] {\begin{minipage}[lt]{17.15pt}\setlength\topsep{0pt}
$\displaystyle w_{3}$\\
\end{minipage}};
\draw (230,108) node   [align=left] {\begin{minipage}[lt]{17.15pt}\setlength\topsep{0pt}
$\displaystyle w_{2}$\\
\end{minipage}};

\end{tikzpicture}
	\end{figure} 
 \end{figure}

\begin{quote}

\mh The accessibility of this example is similar to the previous one: 
$w_2$ and $w_3$ have local access, but $w_1$ can access everything. 
 However, now $w_1$ is a $\cw$-world, so it sees the worlds $w_2$ and $w_3$ `through $C2^w$-glasses', which 
 means that some restrictions are imposed on a user in $w_1$. 

 From $w_1$, the value $T$ assigned to $p$ in $w_2$ is seen as  $T_0$ (a user in $w_1$ has no information about certification), and the value $ \nei $ of $p$ in $w_3$ is seen as $F_0$ (a $C2^w$-world maps everything below $T_0$ to $F_0$). 
 Therefore, according to the information available in $w_1$ (which includes the information in $w_2$ and $w_3$ as seen from $w_1$), 
 $p$ is just false, that is, $v_{w_1}(\Box p) = F_0$.  
\end{quote}
   

\end{example}

\begin{example} \ \label{ex.3}

\mm $I(w_1) = L6, \ I(w_2) = \cw , \  I(w_3) = \ns$ 

   
   $v_{w_1}(p)= \bo  $,  \ $ v_{w_1}(q)= \nei $

      \m  $   (v_{w_1}(p))^{L_{w_2}}= v_{w_2}(\Box p) = F_0  $

         $   (v_{w_1}(p))^{L_{w_3}} = v_{w_3}(\Box p) = F  $ 
 
 \m  $ (v_{w_1}(q))^{L_{w_2}}= v_{w_2}(\Box q) = F_0  $ 
  
     $  (v_{w_1}(q))^{L_{w_3}}= v_{w_3}(\Box q) = \nei   $

%

 \begin{figure}[H]
     \centering

\tikzset{every picture/.style={line width=0.75pt}} 

\begin{tikzpicture}[x=0.75pt,y=0.75pt,yscale=-0.51,xscale=0.51]

\draw   (362,93) .. controls (362,79.19) and (373.19,68) .. (387,68) .. controls (400.81,68) and (412,79.19) .. (412,93) .. controls (412,106.81) and (400.81,118) .. (387,118) .. controls (373.19,118) and (362,106.81) .. (362,93) -- cycle ;
\draw   (196,93) .. controls (196,79.19) and (207.19,68) .. (221,68) .. controls (234.81,68) and (246,79.19) .. (246,93) .. controls (246,106.81) and (234.81,118) .. (221,118) .. controls (207.19,118) and (196,106.81) .. (196,93) -- cycle ;
\draw   (279,243) .. controls (279,229.19) and (290.19,218) .. (304,218) .. controls (317.81,218) and (329,229.19) .. (329,243) .. controls (329,256.81) and (317.81,268) .. (304,268) .. controls (290.19,268) and (279,256.81) .. (279,243) -- cycle ;
\draw    (318.56,220.28) -- (376.59,115.53) ;
\draw [shift={(317.59,222.03)}, rotate = 298.99] [color={rgb, 255:red, 0; green, 0; blue, 0 }  ][line width=0.75]    (10.93,-3.29) .. controls (6.95,-1.4) and (3.31,-0.3) .. (0,0) .. controls (3.31,0.3) and (6.95,1.4) .. (10.93,3.29)   ;
\draw    (289.13,219.27) -- (232.59,115.03) ;
\draw [shift={(290.09,221.03)}, rotate = 241.52] [color={rgb, 255:red, 0; green, 0; blue, 0 }  ][line width=0.75]    (10.93,-3.29) .. controls (6.95,-1.4) and (3.31,-0.3) .. (0,0) .. controls (3.31,0.3) and (6.95,1.4) .. (10.93,3.29)   ;



\draw (197,134) node   [align=left] {\begin{minipage}[lt]{17.15pt}\setlength\topsep{0pt}
 $\cw$
\end{minipage}};
\draw (355,241.47) node   [align=left] {\begin{minipage}[lt]{17.15pt}\setlength\topsep{0pt}
 $L6$
\end{minipage}};
\draw (396,134) node   [align=left] {\begin{minipage}[lt]{17.15pt}\setlength\topsep{0pt}
 $\ns$
\end{minipage}};
\draw (315,256) node   [align=left] {\begin{minipage}[lt]{17.15pt}\setlength\topsep{0pt}
$\displaystyle w_{1}$\\
\end{minipage}};
\draw (397,106) node   [align=left] {\begin{minipage}[lt]{17.15pt}\setlength\topsep{0pt}
$\displaystyle w_{3}$\\
\end{minipage}};
\draw (230,106) node   [align=left] {\begin{minipage}[lt]{17.15pt}\setlength\topsep{0pt}
$\displaystyle w_{2}$\\
\end{minipage}};
\end{tikzpicture}

 \end{figure}

\begin{quote}
\mh 
This example intends to represent how different users (say,  with different privileges) see a database $L6$. 
The worlds $w_2$ and $w_3$ represent these users, thus the valuations in these worlds do not matter in this case and they do not access themselves.  

The values in the $L6$-world $w_1$ are seen `through $C2^w$- and $N3^s$-glasses'. 
 There is contradictory information about $p$ in the $L6$-world $w_1$, but  $w_2$  sees it as  
 just false $F_0$.  
The user represented by \( w_3 \) sees all definite information as certified; therefore, in \( w_3 \), \( p \) gets the value \( F \),   certified false.
 Since in this case sentences $\Box A$ are evaluated only w.r.t. how $w_1$ is seen from the other worlds,  the value of $ \Box p $ seen from a user in $w_2$ is just false $F_0$ but from   $w_3$ it is certified   false $F$. 
 Mutatis mutandis for $\Box q$.

\end{quote}

\end{example}

\begin{example} \ \label{ex.4} 

\m\mh In this example, assume that $R$ is transitive. 

 \m  $I(w_1) =I(w_4)=I(w_7)= L6 $

  $ I(w_2) = I(w_5) = I(w_8) = \lw  $ 

  $ I(w_3) = I(w_6) =I(w_9) = \bs $

\m $v_{w_1}(p)=T$, \  $v_{w_2}(p)=T_0$, \  $v_{w_3}(p)=\bo$. 
 
$v_{w_4}(p)=\nei $, \  $v_{w_5}(p)=T_0$,  \ $v_{w_6}(p)=\bo $.  

 $v_{w_7}(p)=T$, \ $v_{w_8}(p)=T_0$, \ $v_{w_9}(p)=T$. 
 
\m  $v_{w_1}(\Box p) = \bigwedge \{ T_0,T,\nei ,\bo  \}=F_0$,

$v_{w_4}(\Box p) = \bigwedge \{ T_0,T, \nei ,\bo  \}=F_0$,

$v_{w_7}(\Box p) = \bigwedge \{ T_0,T \}=T_0$. 

 \begin{figure}[H]
     \centering
    \input{example4.tex}
 \end{figure}

\begin{quote}
This example aims to represent a possible evolution of the structure of Example \ref{ex.1} over time and can be interpreted as follows.

In the first stage, there is a contradiction in the world $w_3$: the value $\bo$ is assigned to $p$, and $\Box p$ is assigned the value $F_0$. 
This motivates a more cautious attitude in the central database, whose evolution is represented by the $L6$-worlds $w_1$, $w_4$, and $w_7$. 

 In the second stage,  $\nei$ is then assigned to $p$ in $w_4$, indicating a suspension of judgment, 
but  $\Box p$ still is assigned the value $F_0$.

 Finally, in the third stage, the problem is resolved in the $\bs$-database, and $p$ is assigned $T$ in both $w_7$ and $w_9$. $\Box p$ in $w_7$ is then assigned the value $T_0$.

\end{quote}

\end{example}

\section{Modalities in structures  based on \lbb\ and $L6$} \label{sec.modalities}

In this section, we connect the topics discussed in this paper with traditional investigations in modal logic. 
Specifically, we consider general validities within the proposed framework  of \mls s based on  the set 
\lbb\ of matrix logics  and explore the connection between modal formulas and the characterization of modal frames. 
To achieve this, we first need to define what a frame is in this new context. 
 Due to the presence of various logics within a many-logic structure, 
we are faced with two options for defining a frame: (i) treating a world within a frame as merely a point in a graph, or (ii) 
considering it as a point in a graph that also includes a reference to the logic operating in the world.
The option (i) corresponds to what is done in standard Kripke models, since there is only one underlying logic. In this case, 
a frame is defined as a pair $\pair{W,R}$. In our case, since different logics may be assigned to different worlds by the 
function $I$, a frame must contains this information, and so will be defined below as a triple $\pair{W, R, I}$.  
As discussed in \cite{RM24}, this approach is more intuitive and enables us to identify richer properties of frames.




We will now examine some positive and negative results with respect to the behavior of $\Box$ in structures based on \( L6 \) and \lbb. 
More precisely,  the modal axioms \textsf{(K)}, \textsf{(T)}, and \textsf{(4)} are, so to speak, `well-behaved' in the structures with the adequate accessibility relations, but the rule of necessitation does not hold. In order to see this in the present example, we will take advantage of the fact that all the logics in \lbb\ have a negation and an implication with suitable properties.

First we state some simple facts about the lattices being considered:




\begin{lemma}\label{implicationProperty}
Let \textbf{L} be a logic  in \lbb\ and $L$ the corresponding lattice. 
 Given   $a, b \in L$, we have that  $(a \tilde{\to} b) \notin \rm  D$ if, and only if, $a \in \rm D$ and 
    $b \notin \rm D$.
\end{lemma}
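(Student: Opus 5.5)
The plan is to work uniformly through the twist-structure presentation of Definition~\ref{def.twist.geral} rather than checking the nine matrices one by one. The key observation is that in every logic $\ast\in\lbb$ the designated set is ${\rm D}_\ast=\{z\in\textsf{B}_\ast : z_1=1\}$. Membership in ${\rm D}$ therefore depends only on the first coordinate of a snapshot.

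I then look at the first coordinate of $z\,\tilde{\to}\,w$. In both versions of the implication, clause (4) for $\letkp,\fdetobot,\lpto,\ktto,CL^w,CL^s$ and clause (5) for $\lj,\jt,\lt$, this coordinate is the same, namely $\sneg z_1\sqcup w_1$ computed in the two-element Boolean algebra. Hence
$$(a\,\tilde{\to}\,b)\notin{\rm D} \iff \sneg a_1\sqcup b_1=0 \iff a_1=1 \text{ and } b_1=0 \iff a\in{\rm D} \text{ and } b\notin{\rm D}.$$
This is just the Boolean fact that $\sneg x\sqcup y=0$ exactly when $x=1$ and $y=0$. It also matches clause (v3) of the two-valued semantics.

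The only step needing care is that the operation $\tilde{\to}$ is really closed on each domain $\textsf{B}_\ast$, so that $a\,\tilde{\to}\,b$ is an element of $L$ and the designation test makes sense. The lemma presupposes this, and it can be confirmed from the matrices listed in Definition~\ref{def.matrices}. As a sanity check I would compare the conclusion with those tables, for instance the $\letkp$ table, where the non-designated entries $\nei,F_0,F$ occur exactly in the rows $T,T_0,\bo$. For $CL^w$ and $CL^s$ the tables are not displayed, but clause (4) applies to them directly and gives the same first coordinate. I expect no real obstacle; the main point is recognising that designation is decided by the first coordinate alone.
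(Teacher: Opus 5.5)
Your proposal is correct and matches the paper's argument. The paper's proof says only that the result follows either from inspecting the truth tables or from the definition of $\tilde{\to}$ in the twist structures; you carried out the second route explicitly, using that ${\rm D}_\ast=\{z : z_1=1\}$ for every $\ast\in\lbb$ and that clauses (4) and (5) both give first coordinate ${\sim} z_1\sqcup w_1$, and your check that $\tilde{\to}$ is closed on each $\textsf{B}_\ast$ makes explicit something the paper leaves implicit.
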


\begin{proof} Recall from Definition~\ref{def.matrices} the set of matrices considered in $\mathbb{L}$ and the definition of the set $D$ in each case. The result can be seen either by inspecting the truth tables or from the definition of the implication operator in the corresponding twist structures, see Definition~\ref{def.twist.geral}. 
\end{proof}

\begin{lemma}\label{upwardClose}
    Let \textbf{L} be a logic  in \lbb\ and $L$ the corresponding lattice. 
 Given   $a, b \in L$, we have that $a \in \rm D$ and $a \leq b$ implies $b \in\rm  D$.
        
\end{lemma}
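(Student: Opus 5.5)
The plan is to use the twist-structure description of the designated values rather than checking the nine lattices one by one by hand. By Definition~\ref{def.twist.geral}, every logic in \lbb\ has designated set ${\rm D} = \{z \in \textsf{B}_\ast : z_1 = 1\}$. So the claim reduces to showing that the first coordinate is monotone with respect to the lattice order of $L6$, restricted to each dc-sublattice $L$ in $\LAT$. Since each $L$ is a dc-sublattice, its order is the restriction of the order of $L6$. It therefore suffices to show, once and for all in $L6$, that $a \leq b$ and $a_1 = 1$ imply $b_1 = 1$.

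The key step is to identify the elements of $L6$ with first coordinate $1$. These are exactly $T=(1,0,1)$, $T_0=(1,0,0)$ and $\bo=(1,1,0)$. Reading off the Hasse diagram of $L6$, the upset of $\bo$ is $\{\bo, T_0, T\}$, the upset of $T_0$ is $\{T_0, T\}$, and the upset of $T$ is $\{T\}$. Hence $\{T, T_0, \bo\}$ is an up-set of $L6$.

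Now let $L$ be the lattice of some logic in \lbb, and take $a, b \in L$ with $a \in {\rm D}$ and $a \leq b$. Then $a \in \{T, T_0, \bo\}$. Because the order of $L$ is inherited from $L6$, we also have $a \leq b$ in $L6$, so $b \in \{T,T_0,\bo\}$. Since $b \in L$, it follows that $b \in L \cap \{T,T_0,\bo\}$.

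It remains to check that ${\rm D} = L \cap \{T,T_0,\bo\}$ for every logic in \lbb. This holds directly from the listed matrices, for instance ${\rm D}=\{T_0\}$ for \ktto\ with $\nw = \{T_0,\nei,F_0\}$, and ${\rm D}=\{T\}$ for \lt. It also follows uniformly from ${\rm D}_\ast=\{z: z_1=1\}$. Hence $b \in {\rm D}$.

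There is no real obstacle. The only point needing care is the appeal to the inherited order: one must use condition 2 of the definition of dc-sublattice, not the operations $\cdot'$ and $+'$, which may differ from those of $L6$, for example in $\ls$.
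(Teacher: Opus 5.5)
Your proof is correct, and it takes a somewhat different route from the paper's. The paper's proof is a one-line appeal to inspecting the nine matrices of Definition~\ref{def.matrices}, implicitly checking upward closure of each designated set in each lattice separately.

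You instead factor the argument through $L6$, using two observations:
\begin{enumerate}
\item Every lattice in $\LAT$ carries the order restricted from $L6$. This is condition 2 of the definition of dc-sublattice.
\item Every designated set equals $L\cap\{T,T_0,\bo\}$, equivalently the snapshots with first coordinate $1$.
\end{enumerate}
Given these, a single check that $\{T,T_0,\bo\}$ is an up-set of $L6$ replaces the case analysis.

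What your route buys is uniformity and transparency about what is actually used: an inherited order plus one global up-set of designated values. The argument would therefore transfer verbatim to any further dc-sublattice of $L6$ whose designated values are those with $z_1=1$. The paper's inspection needs no structural observation, but it is tied to the specific list of matrices.

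Your caveat is exactly right: only the order, not the operations, is inherited. For instance, $\bo\,\tilde{\wedge}\,\nei = F$ in \ls\ but $F_0$ in $L6$. This is precisely why the argument must go through condition 2 and not through the meets and joins.
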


\begin{proof}
It follows by inspecting the set of matrices considered in $\mathbb{L}$, see Definition~\ref{def.matrices}.
\end{proof}

\begin{proposition} \label{prop.behavior.modal} \ 
     \begin{enumerate}
      
      \item[] (i) Given a world $w$ and a formula $A$, the fact that $A$ holds in all worlds  accessible from $w$ does not imply that $\Box A$ holds in $w$. 
     
     \item[] (ii) The necessitation rule does not hold.

      \item[] (iii)  The modal axiom \textsf{(K)} $\Box (A \to B) \to (\Box A \to \Box B)$ 
      holds in all frames. 

 \end{enumerate}

\end{proposition}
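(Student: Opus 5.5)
For items (i) and (ii) I will give one concrete structure that refutes both. The idea is to exploit the down-interpretation: a value that is designated in a rich world can be seen as non-designated from a poor one.

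Take $W=\{w,w'\}$ with $R=\{(w,w')\}$, $I(w)=CL^w$ (lattice $\cw$), $I(w')=\letkp$ (lattice $L6$), and $v_{w'}(p)=\bo$. By the tables of \letkp, $\bo\,\tilde{\to}\,\bo=\bo$, so $v_{w'}(p\to p)=\bo\in\{T,T_0,\bo\}$. Seen from $w$, $(\bo)^{\cw}=F_0$, hence $v_w(\Box(p\to p))=F_0\notin\{T_0\}$. This settles (i), since $w'$ is the only world accessible from $w$.

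For (ii), the formula $p\to p$ is valid in every logic of \lbb: by Lemma~\ref{implicationProperty}, $a\,\tilde{\to}\, a\notin{\rm D}$ would require $a\in{\rm D}$ and $a\notin{\rm D}$. So $p\to p$ holds at every world of every \mls, yet $\Box(p\to p)$ fails at $w$, and necessitation does not hold.

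For (iii), fix a world $w$. By Lemma~\ref{implicationProperty} applied twice, \textsf{(K)} fails at $w$ iff $v_w(\Box(A\to B))\in{\rm D}_w$, $v_w(\Box A)\in{\rm D}_w$ and $v_w(\Box B)\notin{\rm D}_w$. I will show the first two force the third to be designated. I first observe by inspection that each ${\rm D}_w$ is a principal filter of $L_w$, generated by some $g_w\in\{\bo,T_0,T\}$. Indeed, $\bo$ generates the filter for $L6,\lw,\bw,\ls,\bs$; $T_0$ for $\nw,\cw$; and $T$ for $\ns,\cs$. It follows that, for $x\in L6$, $x^{L_w}\in{\rm D}_w$ iff $x\geq g_w$ in $L6$, using that $g_w\in L_w$ and the definition of the down-interpretation. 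By Remark~(ii) at the end of Section~\ref{sec.from4to6}, $v_w(\Box C)=(\bigwedge_{L6}\{v_{w'}(C): wRw'\})^{L_w}$. Hence $v_w(\Box C)\in{\rm D}_w$ iff $v_{w'}(C)\geq g_w$ for all $w'$ with $wRw'$. An empty set of successors gives the top element, which is designated, so that case is harmless.

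So everything reduces to a per-world modus ponens that is uniform in $g\in\{\bo,T_0,T\}$: in any logic of \lbb, if $v(A)\geq g$ and $v(A\to B)\geq g$, then $v(B)\geq g$. This is the main step. I expect it to be the main obstacle, because it must hold for both implications (4) and (5) across all the lattices. I will check it on snapshots, using the twist-structure definitions.

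For $g=\bo$, the values $\geq\bo$ are exactly those with first coordinate $1$, i.e.\ the designated ones in every logic, so Lemma~\ref{implicationProperty} applies directly. For $g=T_0$, the values $\geq T_0$ are those with $z_1=1,z_2=0$. With $z_1=1$, the implication has first coordinate $w_1$ and second $w_2$, so $B$ gets $(1,0,\cdot)$.

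For $g=T$, we additionally need third coordinate $1$. Under (5) the third coordinate is ${\sim}z_1\sqcup w_3=w_3$. Under (4), with $z_2=w_2=0$, it reduces to $w_1\sqcap w_3=w_3$. Either way $v(B)=T$.

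Applying this at every successor $w'$ with $g=g_w$ gives $v_{w'}(B)\geq g_w$ for all of them, hence $v_w(\Box B)\in{\rm D}_w$. So \textsf{(K)} holds in all frames.
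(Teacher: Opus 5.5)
Your proposal is correct. For (i) and (ii) you use the paper's idea: a two-world structure in which a designated $\bo$ at the successor is seen as $F_0$ from a poorer world. Only the choices differ: $C2^w$ over $L6$ instead of $N3^w$ over $B3^w$, and $p\to p$ instead of $p\to(p\vee q)$.

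For (iii) your route is genuinely different. The paper reasons only with designatedness. From $\Box(A\to B)$ and $\Box A$ designated at $w$ it infers that $A\to B$ and $A$ are designated at each successor, and applies modus ponens there. It then concludes that $\Box B$ is designated at $w$. That last inference is exactly the one refuted by item (i). It is valid only when ${\rm D}_w$ is generated by $\bo$, where ``designated at $w'$'' coincides with ``seen as designated from $w$''. For $N3^w$-, $C2^w$-, $N3^s$- and $C2^s$-worlds more is needed. Your argument supplies exactly that. You characterize $v_w(\Box C)\in{\rm D}_w$ as $v_{w'}(C)\geq g_w$ for every successor $w'$. You then prove modus ponens relative to each threshold $g\in\{\bo,T_0,T\}$, checked on snapshots for both implications (4) and (5). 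This proves (iii) for all frames. The paper's version is shorter and uses only Lemma~\ref{implicationProperty}, but as written it leaves those cases uncovered.

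One step deserves another sentence: ``$x^{L_w}\in{\rm D}_w$ implies $x\geq g_w$''. This does not follow from the definition of the down-interpretation alone. The inequality $x^{L_w}\leq x$ can fail in a dc-sublattice that is not a sublattice; for example, $(T_0)^{L4^s}=T$, the $L4^s$-join of $\bo$ and $\nei$. The implication holds because in every $L_w$ the non-designated values are closed under $L_w$-joins. So if the join of the finite set $\{y\in L_w : y\leq x\}$ is designated, some $y\leq x$ already is, and an empty set yields the non-designated bottom of $L_w$.

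Also, since ${\rm D}_w$ is a principal filter, an $L_w$-meet is designated iff all its elements are. You could therefore bypass Remark (ii) entirely.
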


\begin{proof} \ 

\m\mh For items (i) and (ii) consider the following structure:

\begin{figure}[H]
    \centering
\tikzset{every picture/.style={line width=0.75pt}} 
\begin{tikzpicture}[x=0.75pt,y=0.75pt,yscale=-0.45,xscale=0.45]
\draw   (97,1822.52) .. controls (97,1808.71) and (108.19,1797.52) .. (122,1797.52) .. controls (135.81,1797.52) and (147,1808.71) .. (147,1822.52) .. controls (147,1836.32) and (135.81,1847.52) .. (122,1847.52) .. controls (108.19,1847.52) and (97,1836.32) .. (97,1822.52) -- cycle ;
\draw   (271,1822.52) .. controls (271,1808.71) and (282.19,1797.52) .. (296,1797.52) .. controls (309.81,1797.52) and (321,1808.71) .. (321,1822.52) .. controls (321,1836.32) and (309.81,1847.52) .. (296,1847.52) .. controls (282.19,1847.52) and (271,1836.32) .. (271,1822.52) -- cycle ;
\draw    (147,1822.52) -- (269,1822.52) ;
\draw [shift={(271,1822.52)}, rotate = 180] [color={rgb, 255:red, 0; green, 0; blue, 0 }  ][line width=0.75]    (10.93,-3.29) .. controls (6.95,-1.4) and (3.31,-0.3) .. (0,0) .. controls (3.31,0.3) and (6.95,1.4) .. (10.93,3.29)   ;

\draw (101,1810) node [anchor=north west][inner sep=0.75pt]   [align=left] {$w_1$};
\draw (273,1810) node [anchor=north west][inner sep=0.75pt]   [align=left] {$w_2$};
\draw (355.67,1775) node   [align=left] {\begin{minipage}[lt]{68pt}\setlength\topsep{0pt}
$B3^w$
\end{minipage}};
\draw (109.67,1765) node [anchor=north west][inner sep=0.75pt]   [align=left] {$N3^w$};

\end{tikzpicture}
\end{figure}

\begin{multicols}{4}

$I(w_1)=\nw $ 

$ I(w_2)=\bw$

$R=\{ \pair{w_1, w_2}\}$ \\ 
    
 $v_{w_1}(p)=T_0$

$v_{w_1}(q)=T_0$ 

  $v_{w_2}(p)=\bo $ 

$v_{w_2}(q)=\bo$

\end{multicols}

\m\mh 
(i) Since $p$ holds in $w_2$,   $p$ holds in all worlds accessible from $w_1$. However, the value $\bo$ is seen as $F_0$ in $w_1$, 
 $(v_{w_2}(p))^{L_{w_1}}=F_0 $, so $v_{w_1}(\Box p)=F_0 $. Therefore, 
  $\Box p$ does not hold in $w_1$.

\mm\mh (ii) {To show that necessitation does not hold}, note that 
$p\to  (p\vee q)$ holds in every $w$ of every  structure $M$, since it is a valid formula in all matrix logics in $\mathbb{L}$. 
 Indeed, 
$v_{w_1} (p\to  (p\vee q))=T_0$ and  $v_{w_2}  (p\to  (p\vee q))=\bo $. 
However, 
since $v_{w_2} (p\to  (p\vee q))^{L_{w_1}}=F_0 $,  
  $v_{w_1} (\Box (p\to  (p\vee q)))=F_0$. Therefore  $\Box (p\to  (p\vee q))$
 does not hold in $w_1$.

\mmm\mh  (iii) 
 We show that for any world $w$ and any structure $M$,    
 $v_w(\Box (A\to B) \to (\Box A \to \Box B)) \in \rm D$, where  $\rm{D} = \{T, T_0, \bo \}$. 
{Suppose (i) $v_w(\Box (A\to B))\in \rm D$ and (ii) $v_w(\Box A) \in \rm D$.  
From (i), for every $w'\in W$ such that $wRw'$,$v_{w'}(A\to B)\in D$. Then, by Lemma \ref{implicationProperty}, $v_{w'}(A)\not \in D$ or $v_{w'}(B)\in D$. However, by (ii) $v_{w'}(A)$ must be in $ D$. Therefore, $v_{w'}(B)\in D$. 
Since $w'$ is any world in $W$ such that $wRw'$, $v_{w}(\Box B)\in D$.  }
\end{proof}


















Let us now focus on the modal axioms \textsf{(T)} and  \textsf{(4)}.  We start by defining a frame in a many-logic structure.   


\begin{definition}
    Let $M = \pair{W, R, I, v}$ be an \lbb-structure.  
    We say that $\mathcal{F} = \pair{W, R, I}$ is the underlying  $\mathbb{L}$-frame and $M$ is a structure with frame $\mathcal{F}$. 
\end{definition}

\begin{theorem}  \label{th.refl}
    Let $\mathcal{F}$ be an $\mathbb{L}$-frame. If $\mathcal{F}$ is reflexive,  then $\mathcal{F}$ satisfies \textsf{(T)}.
\end{theorem}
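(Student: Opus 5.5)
The plan is to show that for every $\mathbb{L}$-structure $M=\pair{W,R,I,v}$ with reflexive frame $\mathcal{F}=\pair{W,R,I}$, every world $w\in W$ and every formula $A$, we have $v_w(\Box A\to A)\in {\rm D}_w$. Here ${\rm D}_w$ is the set of designated values of the matrix logic $I(w)$, and the implication is the one of $I(w)$. By Lemma~\ref{implicationProperty}, applied to the logic $I(w)$, it suffices to prove the conditional
\[
v_w(\Box A)\in {\rm D}_w \ \Longrightarrow\ v_w(A)\in {\rm D}_w .
\]
Indeed, $v_w(\Box A\to A)\notin{\rm D}_w$ holds exactly when $v_w(\Box A)\in{\rm D}_w$ and $v_w(A)\notin{\rm D}_w$, so the conditional rules this out.

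The key step is an order comparison inside $L_w$. By the semantic clause ($\ast$) of Definition~\ref{def.clause.box},
\[
v_w(\Box A)=\bigwedge_{L_w}\{(v_{w'}(A))^{L_w} : wRw'\}.
\]
Since $R$ is reflexive, $w$ itself occurs among the $w'$. As $v_w(A)\in L_w$, the observation following the definition of the down-interpretation gives $(v_w(A))^{L_w}=v_w(A)$. An infimum in $L_w$ lies below each of its elements with respect to $\leq'$, so $v_w(\Box A)\leq' v_w(A)$. Because $L_w$ is a dc-sublattice of $L6$, the order $\leq'$ is the restriction of the order of $L6$, so also $v_w(\Box A)\leq v_w(A)$. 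Now assume $v_w(\Box A)\in{\rm D}_w$. Lemma~\ref{upwardClose}, applied to the logic $I(w)$ with $a=v_w(\Box A)$ and $b=v_w(A)$, yields $v_w(A)\in{\rm D}_w$, which is the required conditional.

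I expect no real obstacle; the argument only chains clause ($\ast$), the fixed-point property of the down-interpretation on $L_w$, and the two lemmas. The one point needing care is bookkeeping. Every ingredient (the meet, the order, the designated set, the implication table) must be the one of the local logic $I(w)$, so that Lemmas~\ref{implicationProperty} and~\ref{upwardClose} apply verbatim. The order comparison also relies on $\leq'$ agreeing with the order of $L6$, which holds by the definition of dc-sublattice. Since $w$, $M$ and $A$ were arbitrary among structures on $\mathcal{F}$, it follows that $\mathcal{F}$ satisfies \textsf{(T)}.
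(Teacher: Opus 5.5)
Your proposal is correct and follows essentially the same argument as the paper: reduce via Lemma~\ref{implicationProperty} to showing that $v_w(\Box A)\in{\rm D}$ forces $v_w(A)\in{\rm D}$, use $wRw$ to get $v_w(\Box A)\leq v_w(A)$, and conclude by the upward closure of Lemma~\ref{upwardClose}. The only differences are cosmetic: you argue directly rather than by contradiction, and you spell out two facts the paper uses tacitly, namely $(v_w(A))^{L_w}=v_w(A)$ and that the order of $L_w$ is the restriction of the order of $L6$.
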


\begin{proof}
    Suppose that $\mathcal{F} = \pair{W,R,I}$ is an $\mathbb{L}$-frame and that it is reflexive. Now consider a world $w$ in a model $M = \pair{W,R,I, v}$ such that $w \nvDash \Box A \to A$. 
    From Lemma \ref{implicationProperty}, we obtain $v_w(\Box A) \in D$ and $v_w(A) \notin \rm  D$. Because $\mathcal{F}$ is reflexive, $w R w$ and so $v_w(\Box A) \leq v_w(A)$. Since the set $D$ is upward closed (i.e. from Lemma \ref{upwardClose}), we have the absurd conclusion that $v_w(A) \in D$. Therefore, $w \vDash \Box A \to A$.

    
    
\end{proof}

\begin{theorem}  \label{th.trans}
    Let $\mathcal{F}$ be an $\mathbb{L}$-frame. 
    If  $\mathcal{F}$ is transitive,  then $\mathcal{F}$ satisfies \textsf{(4)}. 
\end{theorem}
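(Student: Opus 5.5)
The plan is to argue exactly as in the proof of Theorem~\ref{th.refl}: by contradiction, reducing the modal formula to a statement about lattice values and using transitivity to compare the two boxed values. Suppose $\mathcal{F}=\pair{W,R,I}$ is transitive, and let $M=\pair{W,R,I,v}$ be a structure on $\mathcal{F}$ with a world $w$ such that $w\nvDash \Box A\to\Box\Box A$. By Lemma~\ref{implicationProperty} this means $v_w(\Box A)\in \rm D$ and $v_w(\Box\Box A)\notin \rm D$. The goal is to show that $v_w(\Box A)\leq v_w(\Box\Box A)$, or at least that designation passes from the first value to the second. Lemma~\ref{upwardClose} then gives the contradiction $v_w(\Box\Box A)\in\rm D$.

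To compare the two values I will use Remark (ii) of Section~\ref{sec.from4to6}, which rewrites the box clause entirely in terms of $L6$. Put $m=\bigwedge_{L6}\{v_{w'}(A) : wRw'\}$. Then
$$v_w(\Box A)=m^{L_w} \quad\mbox{and}\quad v_w(\Box\Box A)=\big(\bigwedge_{L6}\{v_{w'}(\Box A) : wRw'\}\big)^{L_w},$$
where $v_{w'}(\Box A)=\big(\bigwedge_{L6}\{v_{w''}(A) : w'Rw''\}\big)^{L_{w'}}$. Transitivity gives $\{w'' : w'Rw''\}\subseteq\{w' : wRw'\}$ whenever $wRw'$. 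Hence for each successor $w'$ of $w$, the inner meet $\bigwedge_{L6}\{v_{w''}(A): w'Rw''\}$ is $\geq m$. If $w'$ has no successors, this meet is the top element. By the monotonicity of the down-interpretation stated after its definition, it follows that $v_{w'}(\Box A)\geq m^{L_{w'}}$.

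Taking the meet over all successors $w'$ and applying monotonicity once more yields
$$v_w(\Box\Box A)\ \geq\ \big(\bigwedge_{L6}\{m^{L_{w'}} : wRw'\}\big)^{L_w}.$$
It then remains to connect this lower bound with $m^{L_w}=v_w(\Box A)$.

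The main obstacle is this last step. The down-interpretation into an intermediate lattice $L_{w'}$ can lower $m$: for instance, $\bo^{N3^w}=F_0$. So one cannot simply cancel the $L_{w'}$-interpretations. First I would try to show that $m\in\rm D$ (which follows from $m^{L_w}\in\rm D$ together with $m\geq m^{L_w}$ and Lemma~\ref{upwardClose}) implies $m^{L_{w'}}\in\rm D$ for every successor $w'$. Then the meet of designated values, read in $L_w$, remains designated, and this would be checked case by case on the finite lattices in $\LAT$ using the twist coordinates.

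The case to inspect carefully is $m=\bo$ with $L_{w'}\in\{N3^w,N3^s,C2^w,C2^s\}$, where $\bo$ is sent to a non-designated value. This is exactly the phenomenon exhibited in the proof of Proposition~\ref{prop.behavior.modal}(i). If the case check fails there, I expect the argument only closes when every designated value of $L_w$ stays designated when seen from the worlds accessible from $w$. I would therefore examine such a configuration before finalizing the argument.
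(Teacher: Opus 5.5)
Your reduction via Lemma~\ref{implicationProperty} and the monotonicity chain leading to $v_w(\Box\Box A)\geq\big(\bigwedge_{L6}\{m^{L_{w'}} : wRw'\}\big)^{L_w}$ are correct. The step you leave open, however, cannot be closed. The down-interpretation into an intermediate lattice $L_{w'}$ does not preserve designation, and this is not merely a gap in the argument: the statement itself fails on mixed-logic frames.

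Take $W=\{w,w',u\}$ and $R=\{\pair{w,w'},\pair{w',u},\pair{w,u},\pair{u,u}\}$, which is transitive. Let $I(w)=\lw$ (\fdetobot), $I(w')=\nw$ (\ktto), $I(u)=\bw$ (\lpto), with $v_{w'}(p)=T_0$ and $v_u(p)=\bo$. Then $v_w(\Box p)=\bigwedge_{\lw}\{T_0,\bo\}=\bo\in\mathrm{D}$. On the other hand, $v_{w'}(\Box p)=\bo^{\nw}=F_0$ and $v_u(\Box p)=\bo$, so
\[
v_w(\Box\Box p)=\textstyle\bigwedge_{\lw}\{F_0,\bo\}=F_0\notin\mathrm{D}, \qquad v_w(\Box p\to\Box\Box p)=\bo\,\tilde{\to}\,F_0=F_0 .
\]
This is exactly the configuration you were worried about: $m=\bo$, $m^{L_{w'}}=F_0$, and your lower bound $(\bigwedge_{L6}\{F_0,\bo\})^{\lw}=F_0$ is attained. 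So \textsf{(4)} fails on a transitive $\lbb$-frame.

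The paper's printed proof does not avoid this either. It asserts $v_w(\Box\Box B)=\bigwedge S(S^2_w,w)$, i.e.\ it reads the two-step successors directly in $L_w$ and drops the intermediate interpretation $(\cdot)^{L_{w'}}$. That is precisely the step you flagged.

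That identity, and hence $v_w(\Box A)\le v_w(\Box\Box A)$, does hold when $I$ is constant on the frame. All down-interpretations are then identities, and your chain closes at once with Lemma~\ref{upwardClose}. For general $\lbb$-frames an extra hypothesis on how $I$ varies along $R$ is needed. So the right move is to restrict the theorem (or report the counterexample), not to attempt the case check.

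A smaller slip: $m\geq m^{L_w}$ is false in general. For example, $T_0^{\ls}=T$, since $\ls$ is only a dc-sublattice and $\bo\vee\nei=T$ in $\ls$. The conclusion $m\in\{T,T_0,\bo\}$ you wanted still holds by inspection.
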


\begin{proof}
    Let $\mathcal{F} = \pair{W,R,I}$ be a transitive $\mathbb{L}$-frame and $M = \pair{W,R,I, v}$ be a model with underlying frame $\mathcal{F}$. Consider any $w \in W$ and any formula $A$. Because of transitivity, 
    $$S^2_w = \{u  \ : \  \exists w' (w R w' \land w' R u)\} \subseteq \{u  \ : \  w R u\} = S^1_w$$
    For any set $S\subseteq W$ and formula $B$, let $S(B, w)$ be the set $\{v_u(B)^{L_w}  \ : \  u \in S\}$. 
    Then
    $$S(S^2_w, w) \subseteq  S(S^1_w, w)$$
    Note that $v_w(\Box B) = \bigwedge S(S^1_w, w)$ and $v_w({\Box}{\Box}B) = \bigwedge S(S^2_w, w)$. Hence 
    $$v_w({\Box}A) \leq v_w({\Box}{\Box}A)$$
    Consequently, from Lemma \ref{implicationProperty} and Lemma \ref{upwardClose}, $w \vDash {\Box}A \so {\Box}{\Box}A$. Since $w$ is any world, we obtain that the frame indeed satisfies axiom \textsf{(4)}.

\end{proof}

Regarding axioms \textsf{(5)}, ${\Diamond} A \to {\Box } {\Diamond} A$, \textsf{(B)}, $A \to \Box \Diamond A$, and \textsf{(D)}, ${\Box }A \to \Diamond A$, their behavior in our frames will depend further on how $\Diamond A$ is defined. We have at least four  alternatives to define it. First, it may be  defined by taking the supremum  
$\bigvee$ instead of the infimum  $\bigwedge$ in the Definition \ref{def.clause.box}:
\begin{equation}\label{diamond-def-down}
v_{{w}}(\Diamond  A)=\bigvee\limits_{L_{{w}}}\{(v_{w'}(A))^{L_{{w}}}  \ : \  w' \in W \land {w} R w'  \}    
\end{equation}
Second, observe that a classical $\sneg$ negation can be defined in all logics in \lbb\ as
$\sneg A \defi A\to \bot$, since these logics include $\bot$\footnote{For those lattices that do not have values $F$ and $T$, $\circ  A $ is assigned to $F_0$ for every $ A $, i.e. the least value in the corresponding lattice.} and satisfy the schema $A\lor (A\to B)$.
Thus, an alternative definition of ${\Diamond}$ is ${\Diamond}A \defi \sneg {\Box} \sneg A$.
A third option is ${\Diamond}A \defi \neg {\Box} \neg A$, where $\neg$ denotes the negation specific to each logic.
Naturally, these alternatives are not expected to produce identical results.

Observe that defining $\Diamond$ in terms of classical negation is rather irregular.
In particular, regardless of the lattice in $\LAT $ under consideration, the value of a formula $\sneg A$ is always confined to the four-element set $\{T, T_{0}, F_0, F\}$. To illustrate, suppose that every world in a model operates in $\lj$ and that $A$ is assigned the value $b$ at every world. Then $\sneg  A$ evaluates to $F$ at every world; consequently, $\sneg \Box \sneg A$ evaluates to $T$ at every world. This runs counter to the usual intuition that, whenever a formula has a constant value across all worlds, its possibility should preserve that constancy (i.e. it should return that same value).

We may then compare the definition in \cref{diamond-def-down} with the clause $\lnot \Box \lnot$. Consider an $\fdetobot$ world $w$ that accesses only a $\ktto$ world at which $A$ has value $n$. In this situation,
$$v_w(\Diamond A)=F_0  \quad\text{while}\quad  v_w(\lnot \Box \lnot A)=T_0$$
Thus, under \cref{diamond-def-down} we do not, in general, obtain the usual duality between $\Box$ and $\Diamond$. The advantage in this case is that the equivalence between $\Diamond$ and $\lnot \Box \lnot$ will hold in the cases where every world operates in only one logic. 

As our fourth alternative, we replace the down-interpretation of the relativized values with the corresponding up-interpretation. This, as we will see next, fully restores the duality between $\Box$ and $\Diamond$.

\begin{definition} (The up-interpretation) \label{def.up.int} 	
\m	\noindent Let  $L$ be a complete lattice and $L'$ a dc-sublattice of $L$. 
	The {\it up-interpretation} of value $x \in L$  in  $L'$ is defined as follows:
\begin{center}
$ x^{L'}_{up} = \bigwedge_{L'} \{y \in L' \ : \  y \geq x \} $.
\end{center}
\end{definition}

\noindent
Note that, if $\{y \in L'  \ : \   y \geq x \} = \emptyset$, then $x^{L'}_{up}$ is the greatest value in $L'$. Moreover, if $x \in L'$ then $ x^{L'}_{up} = x$.

\begin{definition} (Semantic clause for $\Diamond$) \ \label{def.clause.diamond}
\begin{center}
$v_{{w}}(\Diamond  A)=\bigvee\limits_{L_{{w}}}\{(v_{w'}(A))^{L_{{w}}}_{up}  \ : \  w' \in W \land {w} R w'  \}$. 
\label{clause.box}
\end{center}
\end{definition}

Under this definition, we obtain that $\Diamond$ and $\lnot \Box \lnot$ are equivalent:

\begin{lemma} \label{props-neg}
 Let \textbf{L} be a logic  in \lbb\ and $L$ the corresponding lattice. Let $\neg_L$ be the negation in \textbf{L}.
 Given $A \subseteq L$ and $x, y \in L$, we have that
 \begin{enumerate}
     \item $\neg_L \bigwedge_L A = \bigvee_L \{\neg_L\, x \ : \  x \in A\}$
     \item $\neg_L \bigvee_L A = \bigwedge_L \{\neg_L\, x \ : \  x \in A\}$
     \item $x \leq \neg_L\, y$ if and only if $\neg_L\, x \geq y$.
     \item $\neg_L:L \to L$ is a bijection.
     \item $\neg_L\, x = \neg_{L6}\, x$, where $\neg_{L6}$ is the negation in $L6$.
 \end{enumerate}
 \end{lemma}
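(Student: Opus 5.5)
The natural order is to prove item 5 first and then derive items 4, 3, 1 and 2 from it. For item 5 we compare the twist-structure definitions with the matrices of Definition~\ref{def.matrices}. In every logic of \lbb\ the negation is given by clause (3) of Definition~\ref{def.twist.geral}, $\tilde{\neg}(z_1,z_2,z_3)=(z_2,z_1,z_3)$. The six values of $L6$ are identified with the same triples in every $\textsf{B}_\ast$, so the negation in $L$ is simply the restriction of $\neg_{L6}$ to $L$. We also need $L$ to be closed under this operation, and this can be read off the tables. $\neg_{L6}$ swaps $T\leftrightarrow F$ and $T_0\leftrightarrow F_0$, and fixes $\bo$ and $\nei$. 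Each domain in $\LAT$ contains $T$ iff it contains $F$, and $T_0$ iff it contains $F_0$. So $\neg_L x=\neg_{L6}x\in L$ for all $x\in L$.

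Item 4 then follows at once, because $\neg_{L6}$ is an involution and restricts to an involution of $L$. For item 3 we first show that $\neg_{L6}$ is order-reversing on $L6$. It suffices to check the covering pairs of the Hasse diagram: $F\le F_0$, $F_0\le\bo$, $F_0\le\nei$, $\bo\le T_0$, $\nei\le T_0$, $T_0\le T$. Under $\neg_{L6}$ these go respectively to $T\ge T_0$, $T_0\ge\bo$, $T_0\ge\nei$, $\bo\ge F_0$, $\nei\ge F_0$, $F_0\ge F$, all of which hold. The order of $L$ is the restriction of the order of $L6$, since $L$ is a dc-sublattice. Hence $\neg_L$ is an order-reversing involution of $L$, i.e.\ a dual automorphism. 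Item 3 follows: if $x\le\neg_L y$, then $\neg_L x\ge\neg_L\neg_L y=y$, and the converse is symmetric.

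For items 1 and 2 we use the general fact that an order-reversing bijection whose inverse is also order-reversing is a dual order-isomorphism, and therefore sends infima to suprema and suprema to infima. This holds for arbitrary subsets, including the empty one: $\neg_L$ maps the greatest element of $L$ to its least element and vice versa, which matches the conventions $\bigwedge_L\emptyset=\top_L$ and $\bigvee_L\emptyset=\bot_L$. Concretely, for item 1 we show that $\neg_L\bigwedge_L A$ is an upper bound of $\{\neg_L x : x\in A\}$ and lies below every other upper bound $u$. The first holds because $\bigwedge_L A\le x$ gives $\neg_L x\le\neg_L\bigwedge_L A$. For the second, if $\neg_L x\le u$ for all $x\in A$, then by item 3 $\neg_L u\le x$ for all $x\in A$, so $\neg_L u\le\bigwedge_L A$, and therefore $\neg_L\bigwedge_L A\le u$. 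Item 2 is dual, or can be obtained from item 1 applied to $\{\neg_L x : x\in A\}$ together with the involution property.

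The main point needing care is item 5: confirming that each of the nine lattices is closed under the $L6$ negation and uses the same twist negation. This is a finite inspection of the domains in Definition~\ref{def.twist.geral}. Once it is settled, the remaining items are order-theoretic consequences.
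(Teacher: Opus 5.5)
Your proof is correct. For item 5 you do what the paper's brief proof indicates: you observe that every logic in \lbb\ uses the same twist negation $(z_1,z_2,z_3)\mapsto(z_2,z_1,z_3)$ on the same triples, and that each domain is closed under it.

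The paper leaves items 1--4 to the reader, noting only that they hold for these particular lattices and not in general. Your route is more structural than the case-by-case inspection the paper implies. You reduce everything to a single fact: $\neg_L$ is an order-reversing involution of $L$ under its induced order. You check this once, on the six covering pairs of $L6$ together with the closure of the domains. The De Morgan laws for arbitrary subsets, including the empty one, then follow from a general order-theoretic argument.

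This route has two advantages. First, it makes the paper's caveat precise: items 1--4 hold for any dc-sublattice of $L6$ that is closed under $\neg_{L6}$, with the restricted negation. Second, your argument works entirely inside $L$'s own order instead of transferring infima from $L6$. This matters, because $\bigwedge_L$ need not agree with $\bigwedge_{L6}$: in \ls\ we have $\bo\wedge\nei=F$, whereas in $L6$ the meet is $F_0$.
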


 \begin{proof}
     The result is not general for every lattice, but it is for the corresponding lattices being considered in \lbb. For item 5, recall  Definition~\ref{def.twist.geral}. We leave the (easy) proof to the reader.
 \end{proof}

\begin{theorem}
    Every world $w$ in any \lbb-structure is such that $v_w(\Diamond A) = v_w(\lnot \Box \lnot A)$.
\end{theorem}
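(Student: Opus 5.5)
The plan is to reduce the statement to a single commutation identity between the negation and the two interpretations: for every dc-sublattice $L'\in\LAT$ and every $x\in L6$,
\begin{equation*}
(\neg_{L6}\, x)^{L'} = \neg_{L'}\big(x^{L'}_{up}\big).
\end{equation*}
Once this is available, the theorem is a direct computation using the clause for $\Box$ (Definition~\ref{def.clause.box}), the clause for $\Diamond$ (Definition~\ref{def.clause.diamond}), and Lemma~\ref{props-neg}.

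\emph{Step 1: the commutation identity.} By item 5 of Lemma~\ref{props-neg}, each $L'\in\LAT$ is closed under $\neg_{L6}$, and $\neg_{L'}$ is its restriction. By item 4, $\neg_{L'}$ is a bijection of $L'$. Item 3, read in $L6$, says $y\le \neg x$ iff $\neg y\ge x$. Hence
\begin{equation*}
(\neg x)^{L'}=\bigvee_{L'}\{y\in L' : y\le \neg x\}=\bigvee_{L'}\{y\in L' : \neg y\ge x\}.
\end{equation*}
Reindex with $y=\neg z$, $z\in L'$, which is legitimate by the bijection and the involutivity visible in the tables. This gives
\begin{equation*}
\bigvee_{L'}\{\neg z : z\in L',\ z\ge x\}=\neg\bigwedge_{L'}\{z\in L' : z\ge x\}=\neg\, x^{L'}_{up},
\end{equation*}
where the middle equality is item 1 of Lemma~\ref{props-neg} applied in $L'$. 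The empty-set case needs a separate check: the left side is then the least element of $L'$ and $x^{L'}_{up}$ is the greatest. It is covered by item 1 with $A=\emptyset$, or directly by noting that $\neg$ swaps top and bottom of each $L'$ ($T\leftrightarrow F$, $T_0\leftrightarrow F_0$).

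\emph{Step 2: the computation.} Fix $w$ and write $S=\{w' : wRw'\}$. For each $w'\in S$, the local negation gives $v_{w'}(\neg A)=\neg_{L_{w'}}v_{w'}(A)=\neg_{L6}\,v_{w'}(A)$ by item 5. Then
\begin{equation*}
v_w(\neg\Box\neg A)=\neg_{L_w}\bigwedge_{L_w}\{(\neg v_{w'}(A))^{L_w} : w'\in S\}.
\end{equation*}
By Step 1 this equals
\begin{equation*}
\neg\bigwedge_{L_w}\{\neg (v_{w'}(A))^{L_w}_{up} : w'\in S\}.
\end{equation*}
Applying item 1 and then involutivity gives
\begin{equation*}
\bigvee_{L_w}\{(v_{w'}(A))^{L_w}_{up} : w'\in S\},
\end{equation*}
which is exactly $v_w(\Diamond A)$. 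When $S=\emptyset$ the same chain still works, since the negation of the top of $L_w$ is its bottom.

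The main obstacle is Step 1. The point $x$ generally lies outside $L'$, so the order-reversal must be used in $L6$ while the suprema and infima are taken in $L'$. This requires $L'$ to be closed under $\neg_{L6}$, so that the reindexing $y=\neg z$ stays inside $L'$, and it requires the De Morgan law of item 1 to hold in $L'$ itself. Both facts come from Lemma~\ref{props-neg}. The closure can also be confirmed by inspecting the nine lattices, each of which is symmetric under the swaps $T\leftrightarrow F$ and $T_0\leftrightarrow F_0$ and fixes $\bo$ and $\nei$.
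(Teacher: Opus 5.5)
Your proof is correct and follows essentially the paper's route. It uses the same ingredients of Lemma~\ref{props-neg}: De Morgan in $L_w$, order reversal in $L6$ via item 3, agreement of the local negations with $\neg_{L6}$, and reindexing through the bijective negation. The only differences are cosmetic: you isolate $(\neg_{L6}x)^{L'}=\neg_{L'}(x^{L'}_{up})$ as a separate step, whereas the paper proves the equivalent $\neg_{L_w}\big((\neg v)^{L_w}\big)=v^{L_w}_{up}$ inline in one chain, and your reindexing uses involutivity rather than mere bijectivity, which is harmless since $\tilde{\neg}$ just swaps the first two coordinates of a snapshot.
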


\begin{proof}
    For each $w \in W$ let $L_w$ be the corresponding lattice. Given $w \in W$, we calculate the value of $v_w(\lnot \Box \lnot A)$ using the previous lemma:
    \begin{align*}
        v_w(\lnot \Box \lnot A ) &= \neg_{L_w} \bigwedge_{L_w} \{{(v_{w'}(\lnot A ))}^{L_w}_{down} \ : \  w R w'\} \ \mbox{$v_w$ is a valuation}\\
        &= \neg_{L_w} \bigwedge_{L_w} \{\bigvee_{L_w}\{x \in L_w \ : \  x \leq v_{w'}(\lnot A )\} \ : \  w R w'\} \ \mbox{by definition of $x^{L_w}_{down}$}\\
        &= \bigvee_{L_w} \{\neg_{L_w} \bigvee_{L_w}\{x \in L_w \ : \  x \leq v_{w'}(\lnot A )\} \ : \  w R w'\} \ \mbox{by Lemma~\ref{props-neg}(1)}\\
        &= \bigvee_{L_w} \{\bigwedge_{L_w}\{\neg_{L_w}\, x \ : \  x \in L_w, \  x \leq v_{w'}(\lnot A )\} \ : \  w R w'\} \ \mbox{by Lemma~\ref{props-neg}(2)}\\
        &= \bigvee_{L_w} \{\bigwedge_{L_w}\{\neg_{L_w}\, x \ : \  x \in L_w, \  x \leq \neg_{L_{w'}} v_{w'}(A )\} \ : \  w R w'\} \ \mbox{$v_{w'}$ is a valuation}\\
        &= \bigvee_{L_w} \{\bigwedge_{L_w}\{\neg_{L_w}\, x \ : \  x \in L_w, \ x \leq \neg_{L6}\, v_{w'}(A )\} \ : \  w R w'\} \ \mbox{by Lemma~\ref{props-neg}(5)}\\
        &= \bigvee_{L_w} \{\bigwedge_{L_w}\{\neg_{L_w}\, x \ : \  x \in L_w, \ \neg_{L6}\, x \geq v_{w'}(A )\} \ : \  w R w'\} \ \mbox{by Lemma~\ref{props-neg}(3)}\\
        &= \bigvee_{L_w} \{\bigwedge_{L_w}\{\neg_{L_w}\, x \ : \  x \in L_w, \ \neg_{L_w}\, x \geq v_{w'}(A )\} \ : \  w R w'\} \ \mbox{by Lemma~\ref{props-neg}(5)}\\
        &= \bigvee_{L_w} \{\bigwedge_{L_w}\{x \in L_w \ : \  x \geq v_{w'}(A )\} \ : \  w R w'\} \ \mbox{by Lemma~\ref{props-neg}(4)}\\
        &= \bigvee_{L_w} \{{(v_{w'}(A ))}^{L_w}_{up} \ : \  w R w'\} \ \mbox{by definition of $x^{L_w}_{up}$}\\
        &= v_{w}(\Diamond A ) \ \mbox{by Definition~\ref{def.clause.diamond}}.
    \end{align*}
\end{proof}

Again using Definition~\ref{def.clause.diamond}, we may connect Eucliden frames with modal formulas. First, let us observe that euclidean frames are not characterized by axiom \textsf{(5)}. Suppose the Euclidean frame composed of 3 worlds $w_1$, $w_2$ and $w_3$ each accessing each other; the lattices of $w_1$, $w_2$ and $w_3$ are $\fdetobot$, $\ktto$ and $\lpto$ respectively. Let the value of $A$ be $F_0$ in $w_1$, $F_0$ in $w_2$ and $b$ in $w_3$. We thus obtain $v_{w_1}(\Diamond A) = v_{w_3}(\Diamond A) = b$, $v_{w_2}(\Diamond A) = F_0$ and, consequently, $v_{w_1}(\Box \Diamond A) = F_0$. The value of axiom \textsf{(5)} is $F_0$ in $w_1$ and so it fails in an Euclidean frame.

Now, following the strategy of \cite{freiremartins-coniglio}, we modify formula \textsf{(5)} in order to obtain the desired characterization. In that work, the authors use the formula $\Diamond \circ A  \to \Box \Diamond \circ A $ to characterize Euclidean frames for a range of four-valued Boolean lattices. Here,
an analogous move works (for Euclidean frames, and also for other frame properties).
Recalling that the values of $\sneg A $ are confined to the set $\{T, T_{0}, F_0, F\}$,
we employ the formula $\Diamond \sneg A  \to \Box \Diamond \sneg A $ to characterize Euclidean frames.\footnote{The appeal to classical negation in the characterization of \textsf{(5)} suggests that the Euclidean property is more closely connected to classical valuations than the other properties. Further work is needed to clarify the nature of this connection and what, exactly, it amounts to.}

\begin{theorem}\label{th.euclidean}
    Let $\mathcal{F}$ be an $\mathbb{L}$-frame. 
    $\mathcal{F}$ is Euclidean if and only if $\mathcal{F}$ satisfies $\Diamond \sneg A  \to \Box \Diamond \sneg A $ for every $A $. 
\end{theorem}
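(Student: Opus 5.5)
The plan is to reduce everything to a designation analysis of the formula $\sneg A = A \to \bot$. Its key property is that, at any world $u$, $v_u(\sneg A)\in \mathrm{D}_u$ iff $v_u(A)\notin \mathrm{D}_u$. This follows from Lemma~\ref{implicationProperty}, because $\bot$ is never designated. Moreover, $v_u(\sneg A)$ always lies in the chain $\{F,F_0,T_0,T\}\cap L_u$. The first step is a small auxiliary lemma, checked by inspecting the nine lattices of $\LAT$. It says that, for the values $\sneg A$ can take, the up-interpretation of Definition~\ref{def.up.int} and the down-interpretation both preserve and reflect designation:
\begin{center}
$x\in\{T,T_0\}$ iff $x^{L'}_{up}\in \mathrm{D}'$ iff $x^{L'}\in \mathrm{D}'$,\quad for $x\in\{F,F_0,T_0,T\}$.
\end{center}
I must also check that suprema in $L_w$ of such values are designated iff at least one argument is, and that infima are designated iff all arguments are. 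This is where the upward closure of Lemma~\ref{upwardClose} and the shape of the chain enter. With this in hand, $v_w(\Diamond\sneg A)\in\mathrm{D}_w$ iff some $u$ with $wRu$ has $v_u(A)\notin\mathrm{D}_u$. The same holds one level up for $\Box\Diamond\sneg A$.

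I expect this lemma to be the main obstacle. For instance, $F_0$ seen from $\cs$ under the up-interpretation is $T$, which would break the lemma. The values of $\Diamond\sneg A$ also need not remain in the four-element chain, so the $\Box$ step needs separate care. I would first try to restrict which pairs (value, lattice) can actually arise. Note that $\sneg A$ takes only values $T,F$ in strong lattices, only $T_0,F_0$ in weak ones, and $T,T_0,F$ in $L6$. The problematic combinations may therefore simply not occur, and I would verify this case by case. Failing that, I would fall back on working directly with the value-level inequalities coming from Proposition~\ref{Prop3} and the remarks after it.

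For the ($\Rightarrow$) direction, assume $\mathcal{F}$ is Euclidean and $v_w(\Diamond\sneg A)\in\mathrm{D}_w$. Then there is $u$ with $wRu$ and $v_u(A)\notin\mathrm{D}_u$. For any $v'$ with $wRv'$, the Euclidean property gives $v'Ru$, so $v_{v'}(\Diamond\sneg A)$ is designated. Taking the infimum over these $v'$, as seen from $L_w$, then gives $v_w(\Box\Diamond\sneg A)\in\mathrm{D}_w$. The conclusion $w\vDash \Diamond\sneg A\to\Box\Diamond\sneg A$ follows by Lemma~\ref{implicationProperty}.

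For the ($\Leftarrow$) direction, suppose $wRu$ and $wRv$ but not $vRu$, and take $A=p$. I set $v_u(p)$ to the least element of $L_u$, so that $v_u(\sneg p)$ is the top of $\{T_0,T\}\cap L_u$. At every other world I set $p$ to the greatest element, so that $\sneg p$ gets the least value. Then $v_w(\Diamond\sneg p)$ is designated, since $u$ is a witness and a top value stays designated upward. On the other hand, $v$ only reaches worlds where $\sneg p$ is bottom. The up-interpretation of a least element is the least element of $L_v$, so $v_v(\Diamond\sneg p)$ is non-designated. Hence $v_w(\Box\Diamond\sneg p)\notin\mathrm{D}_w$, and the formula fails at $w$ by Lemma~\ref{implicationProperty}.
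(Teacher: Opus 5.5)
You reduce everything to designation of $\sneg A$, which is also the paper's route. However, the auxiliary lemma you flag as the main obstacle is false in configurations that really occur. With it, the ($\Rightarrow$) direction of the statement itself fails, so no case analysis or appeal to Proposition~\ref{Prop3} can finish your argument. By your own tally, $\sneg A$ takes the value $F_0$ at weak worlds, and nothing in the notion of an $\mathbb{L}$-frame stops a $\cs$- or $\bs$-world from accessing a weak world. Take $W=\{w,u\}$ and $R=\{\pair{w,u},\pair{u,u}\}$, which is Euclidean, with $I(w)=CL^s$, $I(u)=CL^w$ and $v_u(p)=T_0$. Then $v_u(\sneg p)=T_0\tilde{\to}F_0=F_0$ and $v_u(\Diamond\sneg p)=F_0$, while
\[
v_w(\Diamond\sneg p)=(F_0)^{\cs}_{up}=T, \qquad v_w(\Box\Diamond\sneg p)=(F_0)^{\cs}=F .
\]
So $v_w(\Diamond\sneg p\to\Box\Diamond\sneg p)=T\tilde{\to}F=F$, which is not designated. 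With $I(w)=\jt$ instead, you get $(F_0)^{\bs}_{up}=\bo$ and again $\bo\tilde{\to}F=F$. Your worry about the $\Box$ step is also realised: in the same frame take $I(w)=\lt$, $I(u)=\letkp$ and $v_u(p)=\nei$. Then $v_u(\sneg p)=T_0$ and $v_w(\Diamond\sneg p)=(T_0)^{\ns}_{up}=T$, but $v_w(\Box\Diamond\sneg p)=(T_0)^{\ns}=\nei$. The paper's proof makes exactly the step you were unsure of. It infers from $w\vDash\Diamond\sneg A$ that some successor has $v_{w'}(\sneg A)\geq T_0$. It also claims that every $*_1\cdots *_n\sneg A$ stays in $\{T,T_0,F_0,F\}$, which already fails because $(F_0)^{\bs}_{up}=\bo$. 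Your lemma and your ($\Rightarrow$) argument do go through under an extra frame hypothesis. One that suffices: every $\bs$-, $\ns$- or $\cs$-world accesses only worlds whose lattice is $\ls$, $\bs$, $\ns$ or $\cs$. Then the values of $\sneg A$, $\Diamond\sneg A$ and $\Box\Diamond\sneg A$ stay in the chain, and both interpretations preserve and reflect designation.

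In the ($\Leftarrow$) direction you claim that the up-interpretation of a least element is the least element of $L_v$. That holds only for $F$, the bottom of $L6$. At a weak successor $x$ of $v$, your valuation gives $v_x(\sneg p)=F_0$. If $L_v$ is $\bs$ or $\cs$, then $(F_0)^{L_v}_{up}$ is $\bo$ or $T$, so $v_v(\Diamond\sneg p)$ is designated and $w$ is not refuted. This direction can be repaired by a case split.
\begin{itemize}
\item Suppose some $\bs$- or $\cs$-world $y$ accesses a weak world $x$. Give $p$ the top value of its lattice at every world. Then $v_x(\Diamond\sneg p)=F_0$, so the infimum defining $v_y(\Box\Diamond\sneg p)$ contains $(F_0)^{L_y}=F$. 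Meanwhile $v_y(\Diamond\sneg p)\in\{\bo,T\}$, so the formula fails at $y$ by Lemma~\ref{implicationProperty}.
\item Otherwise your construction works as written. Here $v_v(\Diamond\sneg p)\in\{F,F_0,\nei\}$, each of these is non-designated in $L_v$, and each stays non-designated under the down-interpretation into $L_w$. So $v_w(\Box\Diamond\sneg p)$ is non-designated by Lemma~\ref{upwardClose}.
\end{itemize}
Your orientation of the valuation, with $\sneg p$ designated only at the witness $u$, is the right one. The paper's text assigns $F'$ and $T'$ the other way round, which appears to be a slip.
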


\begin{proof}
    Since $\sneg A $ can only receive values $T$, $T_0$, $F_0$ or $F$ and every lattice in $\LAT $ has values $\{T, F\}$, $\{T_0, F_0\}$ or $\{T, F, T_0, F_0\}$, any formula of the form $*_1 *_2 \ldots *_n \sneg A $, where $*_i$ is either $\Box$ or $\Diamond$, receive value $T$, $T_0$, $F$ or $F_0$ in every world. This is easily proved by induction.

    Now, suppose $M$ is an Euclidean structure such that there is a world $w$ with lattice $L$ and a formula $A $ for which
    $$w \nvDash \Diamond \sneg A  \to \Box \Diamond \sneg A $$
    Consequently, $w \vDash \Diamond \sneg A $ and $w \nvDash \Box \Diamond \sneg A $ from \cref{implicationProperty}. 
    So there is a world $w'$ such that $w R w'$ and $v_{w'}(\sneg A ) \geq T_0$ and there is $w''$ such that $w R w''$ and $v_{w''}(\Diamond \sneg A ) \leq F_0$. Now, since the frame is Euclidean, $w'' R w'$. But this implies the contradiction $v_{w''}(\Diamond \sneg A ) \geq T_0$.

    Now suppose $\mathcal{F}$ is a non Euclidean frame. There are $w_1$, $w_2$ and $w_3$ in $\mathcal{F}$ such that $w_1 R w_2$, $w_1 R w_3$, but $w_2 \not R w_3$. Let us use the notation $T'$ to refer to $T_0$ or $T$ and $F'$ to refer to $F_0$ or $F$. We obtain the model $M$ by assigning in $\mathcal{F}$ the value $F'$ to $A$ in every $w'$ such that $w_2 R w'$ and $T'$ to $A$ in $w_3$. This is a consistent attribution of values because $w_2 \not R w_3$. Consequently, $v_{w_1}(\Diamond \sneg A) \geq T_0$ and $v_{w_2}(\Diamond \sneg A) \leq F_0$. From this, we obtain that $v_{w_1}(\Box \Diamond \sneg A) \leq F_0$ and so that the frame $\mathcal{F}$ does not satisfy the formula $\Diamond \sneg A  \to \Box \Diamond \sneg A $.
\end{proof}

The attentive reader may have noticed that Theorems \ref{th.refl} and \ref{th.trans}
do not strongly depend on particular aspects of the base lattice $L6$ nor on the logics in $\mathbb{L}$.
Indeed, we relied only on Lemma \ref{implicationProperty} and Lemma \ref{upwardClose}
for each lattice under consideration. 
This suggests a more general study on the characterization of frames in many-logic modal systems, which will be explored in future work.

\section{Final remarks}  \label{sec.final}

To the best of our knowledge, the logic \fde, together with the respective lattice \lw\ and the intuitive interpretation proposed and investigated by Belnap 
and Dunn,  
  is   the first information-based logic to appear in the literature. 
  \fde\ and its extensions are widely investigated as logics  capable of
expressing the deductive behavior of information (cf.~\citet[p.~928]{wansing.answer.dubois}). 
All the logics investigated here are extensions of \fde. We adopted versions of \fde, \textit{K3}, and 
\textit{LP} with an implication  validating the deduction theorem and modus ponens. 
 This choice was made for the obvious reason that 
   an implication with these features provides more expressive power than the implication definable in the original version of these logics as $\neg A\lor B$. 
 We see   the intuitive interpretations of the formal systems and the respective lattices here investigated as a further 
development of Belnap-Dunn's proposal. 
However, the question of  how these logics could be effectively applied, for instance, in database management systems, falls outside the scope of this paper. 

That  paraconsistent logics are suitable for
expressing the deductive behavior of information   
  is nowadays widely established in the literature\footnote{See e.g. \cite{Bertossietal2005, Blair&Subrahmanian1989, deAmoetal2002,  wans.93}}.  Paracomplete logics in general have been accepted more easily, and earlier, than paraconsistent logics for dealing with 
  information, but the idea that positive and negative information must be treated  on a par indeed leads us to a
logic both paracomplete and paraconsistent. 
On the other hand, some circumstances  may require only paraconsistent, only paracomplete, or even  classical 
 scenarios. We proposed not only  accounts of such circumstances, based on six, four-, three-, and two-valued logics, 
 but also  how these different logics can `communicate' with each other. 
 Moreover, a feature of the many-logic modal structures   here proposed that fits with the concept of an information-based logic is their ability to represent information states that evolve in various ways, depending on how new information is acquired over time.

 As far as  further work is concerned, the following topics seem worth investigating.
The first is to define an operation of merging between databases. 
The idea is that given two structures $M$ and $M'$ and a world $w$ that belongs to both
$M$ and $M'$, the merging yields a structure $M''$ in which $w$ contains
the formulas $p$, $\neg p$, and $\con p$ that hold in $w$ in $M$ and in $M'$, with the proviso
that the operation cannot yield trivial worlds.
The second is to investigate an  approach to the evolution of databases over time that allow 
the definition of temporal modalities  such as
past, future,  next, until, and since. 
Finally, measures of inconsistency  and data certification can be defined, based on 
the number on variables $p$ such that $p$ and $\neg p$ hold in given a world, or in a given structure, and also how this number increases or decreases over time. Analogously, measures of data certification can be defined based on the number of formulas $\con p$.

\bigskip

\noindent {\bf Compliance with Ethical Standards.} This article does not contain any studies with human participants or animals performed by any of the authors.

\medskip

\noindent  {\bf Funding.} This work was partially supported by CIDMA (https://ror.org/05pm2mw36)
under the Portuguese Foundation for Science and Technology 
(FCT, https://ror.org/00snfqn58), Grants UID/04106/2025 (https://doi.org/10.54499/UID/04106/2025)\\
and UID/PRR/04106/2025; and European Funds through SACCCT- IC\&DT - Sistema de Apoio \`a Cria\c c\~ao de Conhecimento Cient\'ifico e Tecnol\'ogico, as part of COMPETE2030, within the project BANKSY with reference number 15253.

The second author acknowledges support from the National Council for Scientific and Technological
Development (CNPq), research grant 307889/2025-4.

The second and the third author acknowledge support from the Sao Paulo Research Foundation (FAPESP, Brazil), thematic project {\em Rationality, logic and probability -- RatioLog}, grant  2020/16353-3.

The third author acknowledges support from the National Council for Scientific and Technological
Development (CNPq), research grant 309830/2023-0.

\bibliographystyle{plainnat}
\bibliography{refs.main}

\end{document}